\newtheorem{thm}{Theorem}
\theoremstyle{definition}
\newtheorem{definition}{Definition}
\newtheorem{exmp}{Example}
\newenvironment{customthm}[1]
  {\thm}
  {\endthm}
\begin{document}

\title{The One-dimensional Version of Peixoto's Structural Stability Theorem:\\
A Calculus-based Proof}
\author{Aminur Rahman\thanks{Corresponding Author, \url{arahman2@uw.edu}}
\thanks{Department of Applied Mathematics, University of Washington, Seattle, WA 98004},
Denis Blackmore\thanks{Department of Mathematical Sciences, New Jersey Institute of Technology, Newark, NJ 07102}}
\date{}
\maketitle

\section{Summary}

Peixoto's structural stability and density theorems represent milestones in the modern theory of dynamical systems and their applications. Despite the importance of these theorems, they are often treated rather superficially, if at all, in upper level undergraduate courses on dynamical systems or differential equations. This is mainly because of the depth and length of the proofs. In this module, we formulate and prove the one-dimensional analogs of Peixoto's theorems in an intuitive and fairly simple way using only concepts and results that for the most part should be familiar to upper level undergraduate students in the mathematical sciences or related fields. The intention is to provide students who may be interested in further study in dynamical systems with an accessible one-dimensional treatment of structural stability theory that should help make Peixoto's theorems and their more recent generalizations easier to appreciate and understand.  Further, we believe it is important and interesting for students to know the historical context of these discoveries since the mathematics was not done in isolation.  The historical context is perhaps even more appropriate as it is the 100\textsuperscript{th} anniversary of Mar\'{i}lia Chaves Peixoto and Mauricio Matos Peixoto's births, February 24\textsuperscript{th} and April 15\textsuperscript{th} 1921, respectively.

\section{Introduction}

The mathematical foundations of the study of dynamical systems were developed by Leibniz, Newton, and the Bernoulis in the late 1600s and early 1700s, through intimate connections to real world problems with works like ``Nova Methodus'' \cite{Leibniz}, ``Principia'' \cite{Principia}, ``Methodus Fluxionum'' \cite{Methodus}, ``Explicationes'' \cite{Bernoulli}, and many others.  Two centuries later, Poincar\'{e} applied new mathematical techniques to the study of celestial mechanics \cite{Poincare1, Poincare2}, which came to be known as \emph{Dynamical Systems}.  He showed that information about a system can be extracted through its qualitative properties (\emph{phase space}); that is, without having to solve the system of equations.  This is at the heart of dynamical systems theory, and is often accomplished through careful mathematical analysis of differential equations.

During the early years of their mathematics education, students learn to calculate results without having to think about the assumptions being made.  Are these calculations even correct? As we progress in our education we learn that we must first logically show from the assumptions that the calculations are valid.  An early example of this arises in a first course on numerical analysis.  It is easy to devise an algorithm to solve a differential equation or to find roots of a function, however not all schemes will converge for every type of equation.  Similarly, in dynamical systems, we would like to use local properties of an equation to make global assertions, but is this always possible?

This is where the idea of \emph{structural stability} (formally defined in Sec. \ref{Sec: Def}) comes in.  A large enough perturbation of the vector field will change the dynamics of any system.  If a dynamical system is structurally stable it is qualitatively immune to sufficiently small perturbations.  For ``real world'' problems, this tells us that a small measurement error will not change the qualitative behavior of our model.  As an example, we can think of a pendulum, where one of the fixed points corresponds to the pendulum pointing straight down.  If the pendulum is undamped, it will oscillate about this fixed point for some initial position away from the fixed point, however if there is even a small amount of damping, the fixed point becomes an attractor and the pendulum eventually comes to rest.  The added damping changes the vector field only slightly, yet has a significant effect on the behavior of the pendulum.  If our measuring tool cannot detect a small amount of friction and concludes that our pendulum is frictionless, the model would predict a constant amplitude, which observations would belie.

In what follows, we shall endeavor to provide a novel proof of Peixoto's structural stability and density theorems in one-dimension using mainly advanced calculus techniques.  We assume student readers have had a first course on differential equations (or dynamical systems), and are perhaps currently taking advanced calculus/real analysis.  We also encourage interested readers to refer to introductory dynamical systems textbooks (e.g. \cite{Strogatz94, Perko01, Meiss07, BlanchardDevaneyHallODE}) and the seminal work of Smale \cite{Smale1967} while reading this paper.  Before we begin, however, it is worth noting some important limitations of the 1-D version of the theorem.  In one-dimension, a differential dynamical system is limited in the variety of possible flows.  Indeed, concepts such as fixed points and stability persist, while isolated nontrivial periodic orbits, separatrices and strange attractors, for examples, cannot occur.  For advanced undergraduate or graduate students, however, we feel that understanding the 1-D proof presented here shall aid them in understanding the nuances of higher dimensional flows.

The remainder of this manuscript is organized as follows:  in Sec. \ref{Sec: History} we briefly discuss the historical events leading up to Peixoto's theorem.  Section \ref{Sec: Original} touches on the original structural stability theorems, then we list some definitions, which may aid the reader, in Sec. \ref{Sec: Def}.  Section \ref{Sec: Thm} contains the focus of the topic of interest: Peixoto's structural stability theorem on a 1-D, closed, connected, continuously differentiable manifold, which must be a circle (cf. \cite{Milnor97}) that can be represented as the unit interval with identified end points.  Next, we briefly discuss Peixoto's density theorem in Sec. \ref{Sec: Density}, which follows from the structural stability theorem, and leave a few remarks about the two theorems to Sec. \ref{Sec: Remarks}.  We conclude our study in Section \ref{Sec: Conclusion} with some final words on structural stability.

\section{Historical context}\label{Sec: History}

Maur\'{i}cio Peixoto was born a century ago in Fortaleza, northeastern Brazil on April 15\textsuperscript{th} 1921.  In 1943, Peixoto, mainly known for his contributions to Mathematics, graduated with a Civil Engineering degree from the University of Brazil (Universidade do Brasil) \cite{Sotomayor2001}.  Indeed, like many modern applied mathematicians, much of what Peixoto studied was motivated by ``real world'' observations.  The University is also where he met his wife and collaborator, Mar\'{i}lia Chaves Peixoto.  After visiting the University of Chicago for a couple of years, the Peixotos, Leopoldo Nachbin, and others founded the Instituto de Mathem\'{a}tica Pura e Aplicada (IMPA) in 1953 \cite{Sotomayor2001}.  Such is the legacy of this founding that after only 60 years of existence, the IMPA mathematician, Artur Avila, won the fields medal for his contributions to Dynamical Systems Theory.  Maur\'{i}cio's interest in the structural stability of a dynamical system can be traced back to his visit to Princeton University between 1957 and 1958, where he was mentored by one of the pioneers of modern Dynamical Systems Theory, Solomon Lefschetz.  Soon after, Mar\'{i}lia and Maur\'{i}cio began a prolific research program on structural stability.  In 1959, the Peixotos, with Mar\'{i}lia as the lead author, published a paper on structural stability \cite{PeixotoPeixoto1959}, which would later be essential to the proof of Peixoto's theorem.  Sadly, the world lost a trailblazing mathematician with the untimely passing of Mar\'{i}lia Chaves Peixoto in 1961.  Maur\'{i}cio later published what would come to be known as Peixoto's theorem in 1962 \cite{Peixoto62}.

\section{The original theorem}\label{Sec: Original}

Inspired by the pioneering work of Andronov and Pontryagin \cite{Andronov-Pontryagin37} and encouraged by Solomon Lefschetz, the Brazilian engineer and mathematician, Maur\'{i}cio Matos Peixoto, with significant contributions from Mar\'{i}lia Chaves Peixoto, formulated and proved the first global characterization (that is, results on an entire domain rather than in the interval of a fixed point) of structural stability (Def. \ref{Def: Structurally Stable}) and its density properties on smooth surfaces \cite{PeixotoPeixoto1959, Peixoto1959, Peixoto62} in terms that have become synonymous with the modern theory of dynamical systems. Work that also blazed a path for myriad extensions and generalizations. One of the most powerful aspects of Peixoto's theorems is the way it uses local properties to characterize global features of dynamical systems. Both the structural stability and density theorems are combined in Theorem \ref{Thm: Peixoto2D} (see Perko \cite{Perko01}).  It should be noted that this theorem is the original and may include terms unfamiliar to undergraduate students, and these terms will be defined in the next section.

\begin{customthm}{P}[Peixoto's Structural Stability and Density Theorems]\label{Thm: Peixoto2D}
Let $\dot{x} = f(x)$ be a $C^{1}$ (continuously differentiable) dynamical system on a smooth closed surface.  Then the dynamical system is structurally stable if and only if it satisfies the following properties:
\begin{itemize}
\item[(i)]  All recurrent behavior is confined to finitely many hyperbolic fixed points and periodic orbits.
\item[(ii)]  There are no separatrices; that is, orbits connecting saddle points.
\end{itemize}
Moreover, if $M$ is orientable, then the set of structurally stable systems is $C^{1}$ - open and dense in the collection of all $C^1$ - dynamical systems on the surface.
\end{customthm}

As stated in the previous paragraph, this theorem involves mathematical concepts unfamiliar to many advanced undergraduates interested in studying dynamical systems, and the proof is quite long and complicated. Given the importance of the results, both from a theoretical and applied perspective, the much simpler one-dimensional analog treated in what follows is likely to prove useful for understanding Peixoto's theorems and their generalizations, which comprise an essential part of the modern theory of dynamical systems and its applications.

Consider a dynamical system on the circle $\mathbb{S}^{1}$, to which all smooth, closed, connected one-dimensional manifolds (curves) are equivalent. We can represent this as the unit interval on the real line with the end points identified
\begin{equation}
\mathbb{S}^{1}=\mathbb{R}/\mathbb{Z},
\end{equation}
where $\mathbb{R}/\mathbb{Z}$ denotes the real numbers modulo $1$; that is, for $x, y \in \mathbb{R}$, $x \sim y \Leftrightarrow x \equiv y (\mod 1) \Leftrightarrow x-y \in \mathbb{Z}$.
Now let $f:\mathbb{R}\rightarrow\mathbb{R}$ be continuous, then we can define our dynamical system as
\begin{equation}
\dot{x}=f(x)\text{,}\qquad f(x+1)=f(x)\quad\forall\,x\in\mathbb{R}.
\label{Eq: original}%
\end{equation}
That is, $f$ is a continuous periodic function of period one. This can be simplified by restricting to one period, namely the unit interval $0\leq x\leq 1$ such that $f(0)=f(1)$. Then \eqref{Eq: original} becomes
\begin{equation}
\dot{x}=f(x)\text{,}\qquad f(1)=f(0). \label{Eq: f}%
\end{equation}
In this context the function $f$ is called a \emph{vector field}. The vector field $f$ on $\mathbb{S}^{1}$ is of class $C^{k}$ if the function that has $k$ continuous derivatives where each derivative is identified at the end points; that is, $f:[0,1]\rightarrow\mathbb{R}$ such that $f^{(m)}(0)=f^{(m)}(1)$ $\forall\,0\leq m\leq k$. An example of this is the graph in Fig \ref{Fig: orig}.
\begin{figure}[htbp]
\centering
\stackinset{l}{2mm}{b}{11mm}{\textbf{\large (b)}}{\stackinset{l}{10mm}{t}{2mm}{\textbf{\large (a)}}{\includegraphics[width = 0.9\textwidth, valign=c]{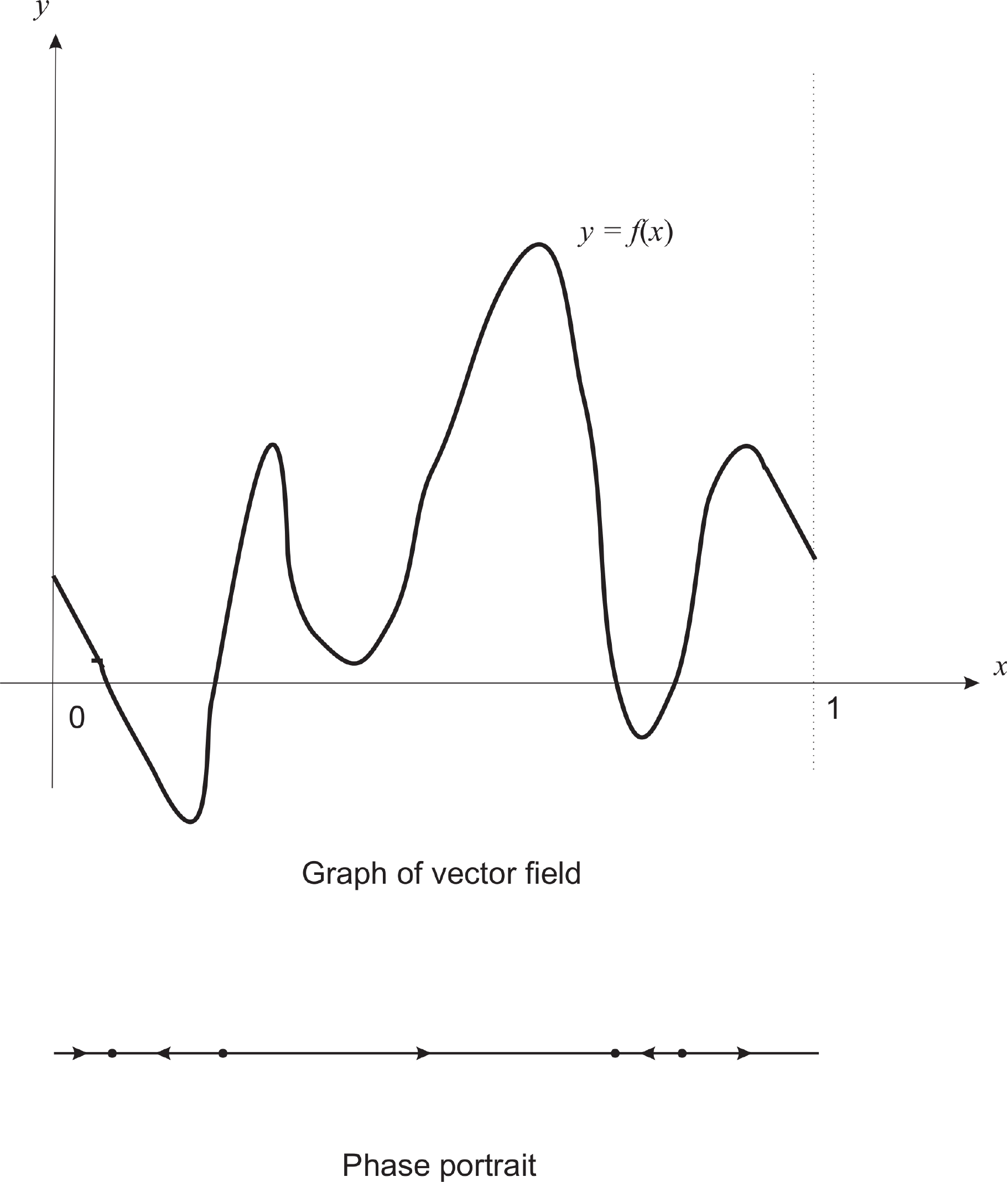}}}
\caption{Example of a vector field of a structurally stable dynamical system on the circle $\mathbb{S}^1$, \textbf{(a)} represented as the unit interval with identified end points, and \textbf{(b)} its phase portrait on the unit interval.}
\label{Fig: orig}
\end{figure}
In addition, applying the vector field to a point $x_0 = x(0)$ for a fixed time $\tau \in \mathbb{R}$ produces the forward iterate $x_1 = x(\tau)$.  Similarly, the backward iterate is $x_{-1} = x(-\tau)$.  Combining these iterates into a single set, $\{x_n : x_n = x(n\tau)\quad \forall n \in \mathbb{Z}\}$, gives us the \emph{orbit} of the system \eqref{Eq: f}.  For example, in Fig. 1(a) if our initial point is between the second and third fixed points, there exists a sequence of points $x_n$ that diverge away from the second fixed point and tend towards the third fixed point.  We say that the orbits of \eqref{Eq: f} are repelled from the second fixed point and attracted to the third fixed point.  In addition to this example, on $\mathbb{S}^1$, there is a variety of possible orbits depending on the orientation of the fixed points.  The simplest type of orbit is a fixed point where $\dot{x}\vert_{x=x_*} = 0 \Rightarrow x(t) = x_*$ for all $t \in \mathbb{R}$.  We may also have periodic orbits on $\mathbb{S}^1$:  suppose the there are no fixed points, then while $\dot{x} \neq 0$, there exists a $\tau \in \mathbb{R}$ such that $x_n = x(n\tau) = x(m\tau) = x_m$ for all $n\in \mathbb{Z}$ and $m\in \mathbb{Z}$ $n \neq m$.

\section{Some key definitions and preliminary theorems}\label{Sec: Def}

Before we begin discussing our main results, let us introduce some definitions that are to play key roles in our analysis. From here on, we shall assume that all of our dynamical systems are at least $C^{1}$ (continuously differentiable).  Also, to simplify the language, we restrict our discussion of manifolds to simple closed $C^{1}$ curves contained in $\mathbb{R}^2$, which are $C^{1}$ equivalent to $\mathbb{S}^{1}$ (cf. \cite{Milnor97}). For those with some knowledge of topology, we note that these curves inherit a metric topology from the standard one on $\mathbb{R}^2$ generated by open balls, and
provide some basic context in the following characterization.

\begin{definition}
\label{Def: topology}
A subset $U$ of $\mathbb{R}^2$ is said to be \emph{open} if for every point $p \in U$ there is an open disk centered at $p$ of positive radius $r$ that is completely contained in $U$, which as trivial examples includes the empty set $\varnothing$ and the whole plane $\mathbb{R}^2$. $(\mathbb{R}^2, \mathcal{U})$, where $\mathcal{U}$ is the collection of all open sets, comprises what is called a \emph{topological space} with topology $\mathcal{U}$, which is referred to as the \emph{Euclidean topology}, \emph{standard topology} or \emph{usual topology} for the plane. Any subset $S$ of the plane inherits the so called \emph{subspace topology} consisting of (open) sets of the form $\{S\cap U:U\in\mathcal{U\}}$. In particular, $\mathbb{R}$ may be considered to be a subspace of $\mathbb{R}^2$, with a topology consisting of sets that are unions of open intervals.

\end{definition}

\begin{definition}
\label{Def: hyperbolic}
A \emph{fixed point}, \emph{stationary point} or \emph{equilibrium point} $x_*$ of \eqref{Eq: f} is one such that $f(x_*)=0$, and is said to be \emph{hyperbolic} if $f'(x_*)\neq0$, otherwise it is said to be \emph{nonhyperbolic}.
\end{definition}

\begin{definition}
\label{Def: homeomorphism}
A map $h : X \rightarrow Y$, where $X$ and $Y$ are subspaces of $\mathbb{R}^2$ with the standard topology, is said to be a \emph{homeomorphism} if it is a bijective (one-to-one and onto) bicontinuous (continuous with continuous inverse) map. If the homeomorphism and its inverse are continuously differentiable, it is called a $C^1$-\emph{diffeomorphism}.
\end{definition}

For example in \cite{Meiss07} it is shown the maps $h:(0,\infty)\rightarrow(0,1)$ defined by $h=1/(1+x^{2})$ and $h:\mathbb{S}^{1}\rightarrow \mathbb{S}^{1}$ defined by $h(x)=x+a\cos x$ for $|a|<1$ are \emph{homeomorphisms}.

\begin{definition}
\label{Def: manifold}
A subset $M$ of $\mathbb{R}^2$ is said to be a 1-dimensional $C^1$-\emph{manifold}, or a 1-dimensional $C^1$-\emph{submanifold} of $\mathbb{R}^2$, if each of its points is contained in an open set that is $C^1$-diffeomorphic to an open set of $\mathbb{R}$ with the usual topology. With this notation, the equivalence of simple closed curves and the unit interval with identified end points $\mathbb{S}^{1}$ can be stated more precisely by saying they are $C^1$-diffeomorphic.
\end{definition}

\begin{definition}
\label{Def: topyequiv}
Two dynamical systems $\dot{x}=f(x)$ and $\dot{y}=g(y)$ on $M$ are \emph{topologically equivalent} if there is a homeomorphism $h$ of $M$ such that $h $ maps oriented (by increasing time) orbits of the first system onto oriented orbits of the second system. Such an $h$ is called a \emph{topological equivalence} between the systems. 
\end{definition}

It is easy to verify that a translation of $-1/4$, which corresponds to $y=h(x)=x-1/4$, is a topological equivalence between the dynamical systems $\dot{x}=\sin(2\pi x)$ and $\dot{y}=\cos(2\pi y)$ on the the unit interval with identified end points $\mathbb{S}^{1}$. A two-dimensional example (in the plane $\mathbb{R}^{2}$) in \cite{Perko01} shows that the linear system $\dot{x}=Ax$ is topologically equivalent to $\dot{y}=By$ where
\begin{equation*}
A=\left[
\begin{array}
[c]{cc}%
-1 & -3\\
-3 & -1
\end{array}
\right]  \quad\text{and}\quad B=\left[
\begin{array}
[c]{cc}%
2 & 0\\
0 & -4
\end{array}
\right]
\end{equation*}
via the homeomorphism
\begin{equation*}
h(x)=\frac{1}{\sqrt{2}}\left[
\begin{array}
[c]{cc}%
1 & -1\\
1 & 1
\end{array}
\right]  x.
\end{equation*}
To see this we note that the origin is the only fixed point of both systems, and for any other initial point $\left(x_{1}^{0}, x_{2}^{0}\right)$, $h$ maps the solution of $\dot{x} = Ax$ beginning at this initial point onto the solution $\left(y_{1}^{0}e^{2t}, y_{2}^{0}e^{-4t}\right)$, with $\left(y_{1}^{0}, y_{2}^{0}\right) = h\left((x_{1}^{0}, x_{2}^{0})\right) = 1/\sqrt{2}\left(x_{1}^{0} - x_{2}^{0}, x_{1}^{0} + x_{2}^{0}\right)$.  This topological equivalence is illustrated in Fig. \ref{Fig: Topological Equivalence}.
\begin{figure}[htb]
\centering
\stackinset{l}{35mm}{t}{2mm}{\textbf{\large (a)}}{\includegraphics[width=.45\textwidth]{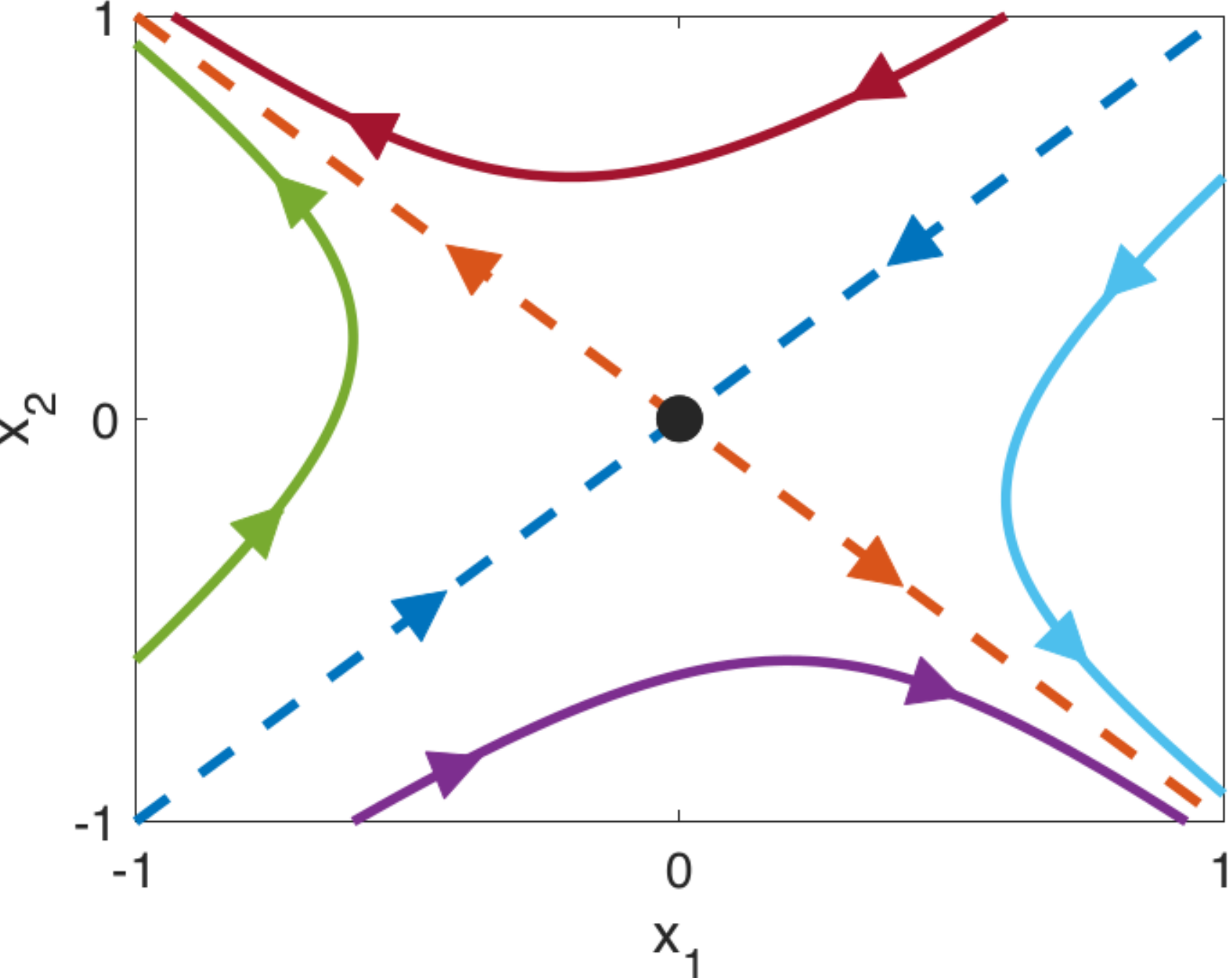}}\qquad
\stackinset{l}{9mm}{t}{2mm}{\textbf{\large (b)}}{\includegraphics[width=.45\textwidth]{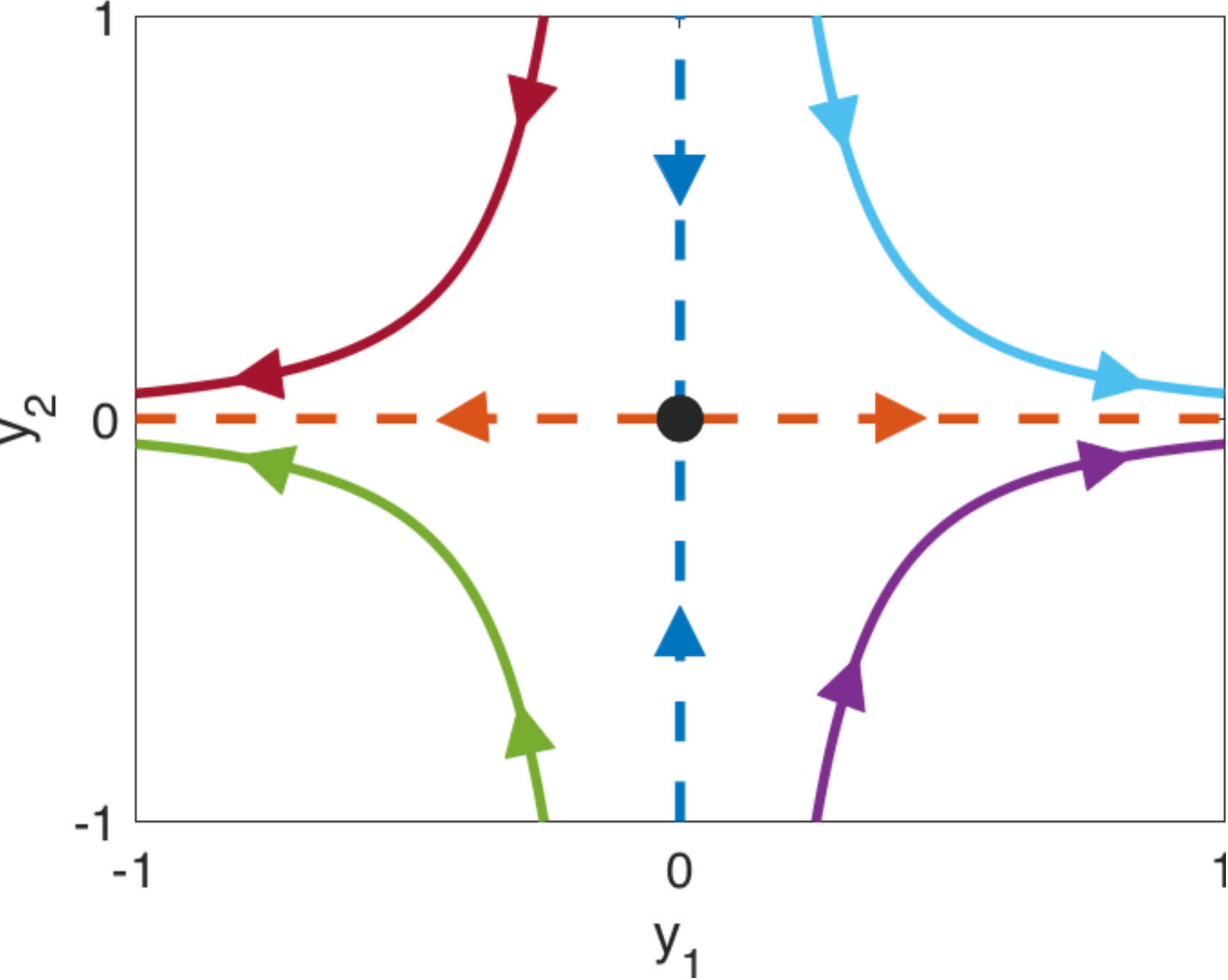}}
\caption{Phase planes of $\dot{x} = Ax$ \textbf{(a)} and $\dot{y} = By$ \textbf{(b)}.  The blue lines with the arrows going into the fixed point correspond to the stable direction of each, and the red/orange lines with arrows going away from the fixed point (black dot) correspond to the unstable direction.  The other colors correspond to representative trajectories where the homeomorphism $h(x)$ maps the trajectories of the same color from \textbf{(a)} to \textbf{(b)}.}
\label{Fig: Topological Equivalence}
\end{figure}

\begin{definition}
Denote the class of $C^{1}$ maps of the unit interval with identified end points by $C^{1}(\mathbb{S}^{1})$. Then
\begin{equation*}
\left\Vert f\right\Vert_1 := \sup\{\left\vert f(x)\right\vert : x\in \mathbb{S}^{1}\}+\sup\{\left\vert f^{\prime}(x)\right\vert :x\in\mathbb{S}^{1}\},
\end{equation*}
defines a norm on $C^{1}(\mathbb{S}^{1})$ called the \emph{$C^{1}$-norm}. This norm generates a topology, called the \emph{$C^{1}$-topology}, in the usual way via the open $\epsilon$-intervals $B_{\epsilon}^{1}(f):= \{g\in C^{1}(\mathbb{S}^{1}) : \left\Vert g-f\right\Vert _{1} < \epsilon\}$.
\end{definition}

\begin{definition}
\label{Def: Structurally Stable}
The dynamical system $\dot{x}=f(x)$ with $f\in C^{1}(\mathbb{S}^{1})$ is said to be $C^{1}$ \emph{structurally stable} if for every $\epsilon>0$ sufficiently small and any $g\in B_{\epsilon}^{1}(f)$, the systems $\dot{x} = f(x)$ and $\dot{y} = g(y)$ are topologically equivalent.
\end{definition}

We note that it is not difficult to imagine how these last two definitions can be generalized to any finite-dimensional closed (compact and without boundary) manifolds.

It is useful to take note of the following rather simple characterization of topological equivalence for a pair of $C^{1}$ dynamical systems, (i) $\dot{x}=f(x)$ and (ii) $\dot{y}=g(y)$ on the unit interval with identified end points $\mathbb{S}^{1}$.
\begin{customthm}{TE}\label{Thm: TE}
A homeomorphism $h:\mathbb{S}^{1}\rightarrow \mathbb{S}^{1}$ is a topological equivalence from (i) to (ii) if and only if
\begin{subequations}
\begin{align}
&h\left(\{x\in \mathbb{S}^1: f(x)=0\}\right) := h(f^{-1}(0)) = \{y\in \mathbb{S}^1: g(y)=0\} := g^{-1}(0)\label{Eq: roots}\\
&\text{and either}\nonumber\\
&(-f)^{-1}(0,\infty) = (g\circ h)^{-1}(0,\infty) \text{  and  } (-f)^{-1}(-\infty,0) = (g\circ h)^{-1}(-\infty,0)\label{Eq: sgn1}\\
&\text{  if $h$ is a decreasing function,}\nonumber\\
&\text{     or}\nonumber\\
&f^{-1}(0,\infty) = (g\circ h)^{-1}(0,\infty) \text{  and  } f^{-1}(-\infty,0) = (g\circ h)^{-1}(-\infty,0) \label{Eq: sgn2}\\
&\text{  if $h$ is an increasing function}\nonumber
\end{align}
\label{Eq: TE}
\end{subequations}
\end{customthm}

\begin{proof}
For sufficiency, let us assume that $h$ is a topological equivalence.  The simplest orbits to choose from (i) are the fixed points, where $f(x) = 0$.  Since $h$ maps orbits of (i) to (ii), it must map the fixed point (zero) set of $f$ homeomorphically onto the fixed point (zero) set of $g$.  Furthermore, since $h$ maps oriented orbits of (i) to (ii), the signs of $f(x)$ and $g\circ h(x)$ between corresponding fixed points must be consistent with the behavior (increasing or decreasing) of $h(x)$; that is, if $h$ is decreasing, $f(x)$ and $g\circ h(x)$ will have opposite signs, and if $h$ is increasing, $f(x)$ and $g\circ h(x)$ will have the same sign.

For necessity let us assume \eqref{Eq: TE} holds.  Then by \eqref{Eq: roots} $h$ maps the fixed point set of (i) homeomorphically onto the fixed point set of (ii).  Moreover, \eqref{Eq: sgn1} and \eqref{Eq: sgn2} imply that $h$ maps positively time-oriented orbits of (i) onto positively time-oriented orbits of (ii).  Consequently $h$ is a topological equivalence.
\end{proof}

Sometimes it may be difficult to see why $h$ is increasing or decreasing on $\mathbb{S}^{1}$.  If we take the example $y = h(x) = -x + 1/2$ where $x, y \in \mathbb{S}^{1}$ we get that $h(0) = 1/2 \Rightarrow y = 1/2$ and $h(3/4) = -1/4 \Rightarrow y = 3/4$.  However, notice that $y = -1/4$ and $y = 3/4$ will correspond to the same point, just like in a circle where $-\pi/2$ and $3\pi/2$ correspond to the same point.  Therefore, $h$ is indeed decreasing even if $x = 0$ maps to $y = 1/2$ and $x = 3/4$ maps to $y = 3/4$.  For other characterizations of topological equivalence similar to Theorem \ref{Thm: TE} see \cite{BlanchardDevaneyHallODE}.

\subsection{Bump functions}\label{Sec: bump}

Throughout the sufficiency portion of the proof of the one-dimensional analog of Theorem \ref{Thm: Peixoto2D}, we will make extensive use of bump functions. A simple example of a bump function is $\psi:\mathbb{R}\rightarrow\mathbb{R}$ defined as \eqref{Eq: simple bump} and plotted in Fig. \ref{Fig: bump}.
\begin{equation}
\psi(x):=%
\begin{cases}
e^{\frac{-1}{1-x^{2}}} & \text{for}\;x\in(-1,1),\\
0 & \text{for}\;x\notin(-1,1);
\end{cases}
\label{Eq: simple bump}%
\end{equation}
It should be noted that we reserve the variable $\psi$ specifically for bump functions, and use $\eta$ to speak of perturbations in general terms.

\begin{figure}[htb]
\centering
\includegraphics[width=.9\textwidth]{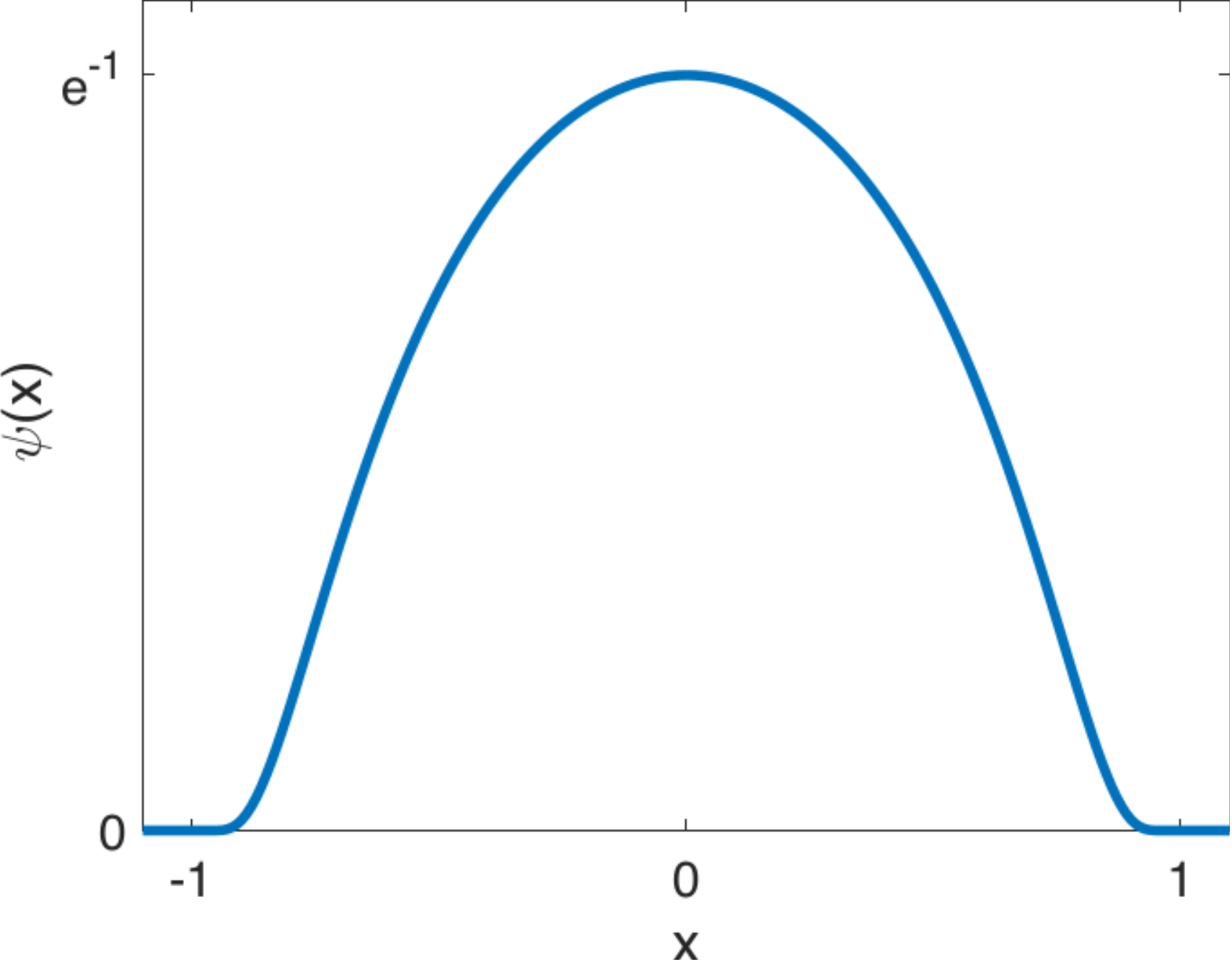}\caption{Example of a bump function.}%
\label{Fig: bump}%
\end{figure}

We can restrict the bump to any interval, which for our proof shall correspond to an interval of our choosing. Let $x_{0}$ be the center of the interval, and $r$ $(>0)$ be the radius. Then, denoting the $r$-interval as $B_{r}\left(  x_{0}\right)  :=\{x\in\mathbb{R}:$ $\left\vert x-x_{0}\right\vert <r\}$, our bump function becomes
\begin{equation}
\psi(x):=%
\begin{cases}
\exp\left(  \frac{-r^{2}}{r^{2}-(x-x_{0})^{2}}\right)  & \text{for}\;x\in
B_{r}(x_{0}),\\
0 & \text{for}\;x\notin B_{r}(x_{0});
\end{cases}
\end{equation}
This can be specified to any interval $[a,b]$, which corresponds to a interval centered at $(a+b)/2$ with a radius of $(b-a)/2$: namely,
\begin{equation}
\psi(x):=
\begin{cases}
\exp\left(  -\left(  \frac{b-a}{2}\right)  ^{2}\middle/\left[  \left(\frac{b-a}{2}\right)  ^{2}-\left(  x-\frac{a+b}{2}\right)  ^{2}\right]\right)  & \text{for}\;x\in(a,b),\\
0 & \text{for}\;x\notin(a,b);
\end{cases}
\end{equation}
We also note that it can be easily verified using basic calculus that the bump function \eqref{Eq: simple bump} satisfies
\begin{equation}
\left\Vert\psi\right\Vert_1 < e^{-1}\left[1+6e^{-1}(b-a)^{-1}\right],
\label{Eq: bounded}
\end{equation}
so once the interval has been specified, for every $\epsilon > 0$ there exists an arbitrary constant $\sigma > 0$ such that $\left\Vert \sigma\psi\right\Vert _{1} < \epsilon$. This will be an important fact to keep in mind for the sufficiency portion of our proof. In many instances we will use a scaled bump function to obtain small $C^{1}$ perturbations of given functions in $C^{1}(\mathbb{S}^{1})$.

\section{Peixoto's theorem on $\mathbb{S}^1$}\label{Sec: Thm}

We note once again that every closed and connected one-dimensional $C^1$ manifold is diffeomorphic to a unit interval with identified end points, as shown in \cite{Milnor97}. So, it suffices to restrict our attention to the unit interval with identified end points (or more accurately a $1$-sphere) $\mathbb{S}^{1}$ in the one-dimensional analog of Theorem \ref{Thm: Peixoto2D} that follows.

\setcounter{thm}{0}
\begin{thm}
\label{Thm: peixoto}
Suppose \eqref{Eq: f} is a $C^{1}$ dynamical system on $\mathbb{S}^{1}$. Then \eqref{Eq: f} is $C^{1}$ structurally stable if and only if it has finitely many fixed points, all of which are hyperbolic.
\end{thm}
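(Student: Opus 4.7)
The plan is to prove the two directions of the equivalence separately. For sufficiency I will exhibit a homeomorphism witnessing topological equivalence to any nearby vector field; for necessity I will construct explicit $C^1$-small perturbations that change the cardinality of the fixed-point set, contradicting Theorem \ref{Thm: TE}.

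For sufficiency, I would begin by fixing the hyperbolic fixed points $x_1 < \cdots < x_n$ of $f$ on $\mathbb{S}^1$ (the case $n=0$, in which $|f|>0$ everywhere, is trivial). Using continuity of $f'$ and hyperbolicity, I would choose disjoint closed arcs $I_i := \overline{B_\delta(x_i)}$ on which $|f'|\geq\mu>0$, and set $m := \min\{|f(x)| : x \in \mathbb{S}^1 \setminus \bigcup_i B_\delta(x_i)\}>0$. For $\epsilon<\min(m,\mu)$ and any $g \in B^1_\epsilon(f)$, the estimates $|g|\geq|f|-\epsilon>0$ off $\bigcup_i I_i$ and $|g'|\geq|f'|-\epsilon>0$ on each $I_i$ are immediate; combined with the sign change of $f$ across each $I_i$ (forced by hyperbolicity) and the resulting strict monotonicity of $g$ on $I_i$, this yields exactly one zero $y_i \in I_i$ of $g$ and no others, so $g$ has $n$ hyperbolic fixed points in the same cyclic order as $f$. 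I would then define $h:\mathbb{S}^1\to\mathbb{S}^1$ to be the increasing piecewise-linear homeomorphism with $h(x_i)=y_i$, and verify the hypotheses of Theorem \ref{Thm: TE}: the zero sets correspond bijectively, and on each arc $(x_i,x_{i+1})$ the signs of $f$ and $g\circ h$ agree (both equal the constant sign of $f$ there, since $|g-f|<m$ on the relevant sub-arcs).

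For necessity, I argue the contrapositive. \textbf{Case A:} $f$ has infinitely many fixed points. I invoke the one-dimensional version of Sard's theorem---that the critical values of $f\in C^1(\mathbb{S}^1)$ form a Lebesgue-null set, provable by a standard covering argument using continuity of $f'$---to select an arbitrarily small regular value $c$ of $f$. The perturbation $g := f-c$ satisfies $\|g-f\|_1 = |c| < \epsilon$, while $g^{-1}(0) = f^{-1}(c)$ consists of hyperbolic (hence isolated) zeros by the implicit function theorem, so is finite by compactness. Theorem \ref{Thm: TE} requires a bijection between zero sets, so the infinite zero set of $f$ cannot match the finite zero set of $g$. \textbf{Case B:} $f$ has finitely many fixed points but some $x_*$ is non-hyperbolic. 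If $f$ has constant sign near $x_*$ (say $f\geq 0$), I take $g := f-\sigma\psi$ with $\psi$ a bump centered at $x_*$ supported in a small neighborhood; then $g(x_*)<0$ while $g=f>0$ on $\partial\,\mathrm{supp}(\psi)$, so the intermediate value theorem gives at least two zeros of $g$ near $x_*$, whereas $f$ had one. If instead $f$ changes sign at $x_*$, I use a linear-times-bump perturbation $g := f-\alpha(x-x_*)\psi(x)$; exploiting $f(x)=o(|x-x_*|)$ (from $f'(x_*)=0$), the linear term dominates on a small subinterval, forcing $g$ to acquire three zeros near $x_*$ in analogy with the cubic model $(x-x_*)^3-\alpha(x-x_*)$. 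In either subcase the fixed-point cardinality changes, so Theorem \ref{Thm: TE} rules out topological equivalence.

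The main obstacle I anticipate is the sign-changing subcase of Case B, where a single-bump perturbation preserves the qualitative dynamics and one must instead use a linear-times-bump perturbation together with a quantitative dominance estimate of the form $|f(x)| < \alpha e^{-1}|x - x_*|/2$ on some subinterval, whose existence relies on $f(x) = o(|x-x_*|)$. Coordinating the smallness of $\alpha$ (to keep $\|g-f\|_1<\epsilon$) with the requirement that the dominance window actually contains new zeros will be the most delicate part; the 1-D Sard reduction in Case A is a comparatively minor technicality.
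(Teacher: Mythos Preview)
Your proposal is correct, and the forward direction (finitely many hyperbolic fixed points $\Rightarrow$ structural stability) is essentially identical to the paper's: both isolate the zeros in disjoint arcs where $|f'|$ is bounded below, bound $|f|$ below on the complement, deduce that any $C^1$-close $g$ has a unique zero in each arc, and then use the piecewise-linear homeomorphism sending $x_i\mapsto y_i$ together with Theorem \ref{Thm: TE}.

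The reverse direction is where you diverge from the paper. The paper handles the failure of the hypothesis by an explicit four-case local analysis (isolated nonhyperbolic zero without sign change, isolated nonhyperbolic zero with sign change, an interval of zeros, and a convergent sequence of zeros), each time constructing a tailored bump-type perturbation supported near the bad point to change the local zero count. Your Case B matches the paper's Cases 1 and 2 closely (your ``linear-times-bump'' is exactly the paper's $\vartheta$; in the constant-sign subcase you \emph{subtract} a bump to create at least two zeros whereas the paper \emph{adds} one to destroy the zero, but either works). The real difference is your Case A: instead of separately treating intervals of zeros and accumulating sequences of zeros with localized bumps, you invoke the one-dimensional Sard theorem to pick an arbitrarily small regular value $c$ and perturb by the \emph{global constant} $-c$, collapsing the paper's Cases 3 and 4 into a single line. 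This is cleaner and shorter; the cost is that Sard's theorem (even the easy $C^1$ one-dimensional version) is a slightly more sophisticated ingredient than the purely constructive bump arguments the paper favors for pedagogical reasons. Your anticipated ``main obstacle'' in the sign-changing subcase is real but not serious: once $\alpha$ is fixed small enough for the $C^1$ bound, the estimate $f(x)=o(|x-x_*|)$ gives the dominance window for that $\alpha$, and IVT applied between that window and $\partial\,\mathrm{supp}(\psi)$ yields the two extra zeros exactly as you sketch.
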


\begin{proof}
For necessity, let us first prove the result for a dynamical system with no fixed points. Suppose \eqref{Eq: f} has no fixed points, then $|f(x)|>0$ on $[0,1]$. Without loss of generality, assume $f$ is positive, which means the phase space consists of a single periodic counterclockwise orbit. Since $f$ is continuous, there is an $\epsilon_{0}>0$ such that $f(x) > \epsilon_{0}\; \forall\, x \in [0,1]$. Consider the dynamical system
\begin{equation}
\dot{y}=g(y), \label{Eq: g}
\end{equation}
where $g\in C^{1}(\mathbb{S}^{1})$ is any function in an $\epsilon$-neighborhood of $f$ in the $C^1$-topology with
\begin{equation}
\left\Vert g-f\right\Vert _{1}<\epsilon<\epsilon_{0}, \label{Eq: C1}%
\end{equation}
then $g$ must also be positive on $\mathbb{S}^{1}$. Therefore, it follows from \eqref{Eq: TE} that the identity map is a topological equivalence, so \eqref{Eq: f} is structurally stable.

As a demonstration consider the function $f(x) = \sin(4\pi x) + 4$ on the unit interval with identified end points.  Then for any $\epsilon$ sufficiently small, if a function $g(y)$ satisfies \eqref{Eq: C1}, it is not possible to introduce any new fixed points or to change the direction of the flow; that is, the dynamics of $\dot{x} = f(x)$ and $\dot{y} = g(y)$ remains the same.  This is illustrated in Fig.\ref{Fig: NoFP}.
\begin{figure}[htbp]
\centering
\includegraphics[width = 0.9\textwidth]{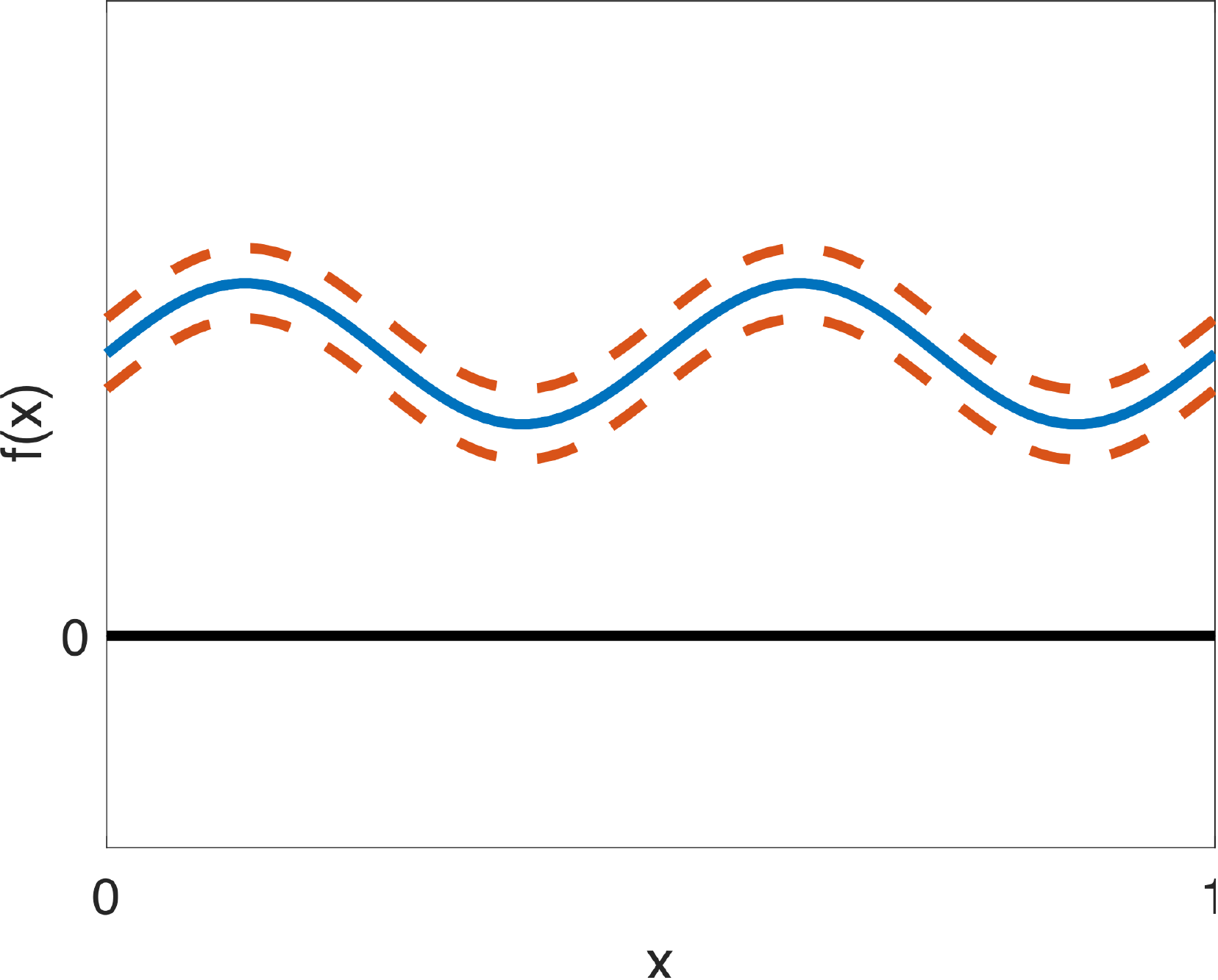}
\caption{Example of a function (solid blue curve), $f(x)$, with no fixed points.  While, in general, we do have to take the derivative into account, as a demonstration we show how any other function $g(y)$ will not cross over the horizontal axis as long as it remains within the bounds (dashed red curves), which represent a simplified version of \eqref{Eq: C1}.}\label{Fig: NoFP}
\end{figure}

Next we prove the result for finitely many hyperbolic fixed points. Suppose $f(x_{k}) = 0$ and $f^{\prime}(x_{k}) \neq 0$ for $k=1,2,\ldots, m$. We may assume none of them is an endpoint of the unit interval, since this can always be accomplished via a simple translation of the period interval. Let us order them as follows, corresponding to a counterclockwise ordering on the unit interval with identified end points:
\begin{equation*}
x_{1}<x_{2}<\cdots<x_{m}.
\end{equation*}
We note that there is an arc of $\mathbb{S}^{1}$ corresponding to each of the intervals $[x_{1},x_{2}],\ldots,[x_{m-1},x_{m}]$ and also an arc corresponding to $[x_{m},1]\cup[0,x_{1}]$, which we denote as the interval $[x_{m},x_{1}]$.

Observe that between any two fixed points in the above intervals, $f$ does not change sign, and in some $\delta$-interval of every fixed point $f^{\prime}$ is nonzero owing to the hyperbolicity and continuity. So, we can select $\epsilon_{0}$ and $\delta > 0$ small enough such that $|f^{\prime}(x)| \geq 2\epsilon_{0}$ on $[x_{k}-\delta, x_{k}+\delta]$ for $k=1,2,\ldots,m$, where the intervals $[x_{k}-\delta, x_{k}+\delta]$ are disjoint. Hence, $f$ is monotonic and has a single zero on each of these intervals. Now define $K(\delta)$ as the closure of the complement of these intervals, which is just a disjoint union of closed intervals itself. Since $f$ does not change sign between any two sequential fixed points, $f$ is nonzero in $K(\delta)$. Furthermore, it follows from continuity of $f$ that there is $0 < \epsilon_1 < \epsilon_0$ such that $|f(x)| \geq 2\epsilon_{1}\; \forall\, x \in K(\delta)$.

Now we show that structural stability is satisfied; in particular, for any $g\in C^{1}\left(\mathbb{S}^{1}\right)$ satisfying
\begin{equation}
\left\Vert f-g\right\Vert_1 < \epsilon < \min\{\epsilon_0, \epsilon_1\}
\label{Eq: pert}
\end{equation}
the dynamical system \eqref{Eq: g} is topologically equivalent to \eqref{Eq: f}. That is, there is a homeomorphism $h: \mathbb{S}^1 \rightarrow \mathbb{S}^1$ mapping oriented orbits of \eqref{Eq: g} to  \eqref{Eq: f}. By \eqref{Eq: pert} $g$ has exactly one zero, denoted as $y_{k}$, on each interval $[x_k-\delta, x_k+\delta]$ and no zeros on $K(\delta)$, and furthermore $g^{\prime}(y_{k})\neq0$ and $g$ has the same sign on $(y_k, y_{k+1})$ as $f$ does on $(x_k, x_{k+1})$ whenever $1 \leq k \leq m-1$ and on $(y_{m},y_{1})$ and$(x_{m},x_{1})$. Consequently, $f$ and $g$ have the same number of fixed points, $\epsilon$-close to one another and have the same signs on the corresponding intervals between the respective zeros.

We select $h$ to be a piecewise linear homeomorphism with the following properties: $h(0)=h(1)=0$ and also $h(x_{k})=y_{k}$ for $k=1,2,\ldots,m$, and linear on the intervals $(0,x_1),\, (x_1,x_2),\, \ldots,\, (x_{m-1},x_m),(x_m,1)$; namely
\begin{equation}
h(x):=\left\{
\begin{array}
[c]{cc}%
\left(  \frac{y_{1}}{x_{1}}\right)  x, & 0\leq x\leq x_{1}\\
\frac{\left(  y_{k+1}-y_{k}\right)  x+\left(  y_{k}x_{k+1}-x_{k}%
y_{k+1}\right)  }{\left(  x_{k+1}-x_{k}\right)  }, & x_{k}\leq x\leq
x_{k+1},1\leq k\leq m-1\\
\left(  \frac{y_{m}}{x_{m}-1}\right)  (x-1), & x_{m}\leq x\leq1
\end{array}
\right.
\end{equation}
This proves the necessity of the hypothesis owing to \eqref{Eq: TE}.

Again, as a demonstration consider the function $f(x) = \sin(4\pi x) + \cos(2\pi x)$ on the unit interval.  Then for any $\epsilon$ sufficiently small, if a function $g(y)$ satisfies \eqref{Eq: pert}, it is neither possible to change the number of fixed points nor the direction of the flow; that is, the dynamics of $\dot{x} = f(x)$ and $\dot{y} = g(y)$ remains the same.  It should be noted that the derivative plays a very important role in this case since a large enough change can convert any of the hyperbolic fixed points into a nonhyperbolic fixed point.  This case is illustrated in Fig. \ref{Fig: FiniteFP}.
\begin{figure}[htbp]
\centering
\includegraphics[width = 0.9\textwidth]{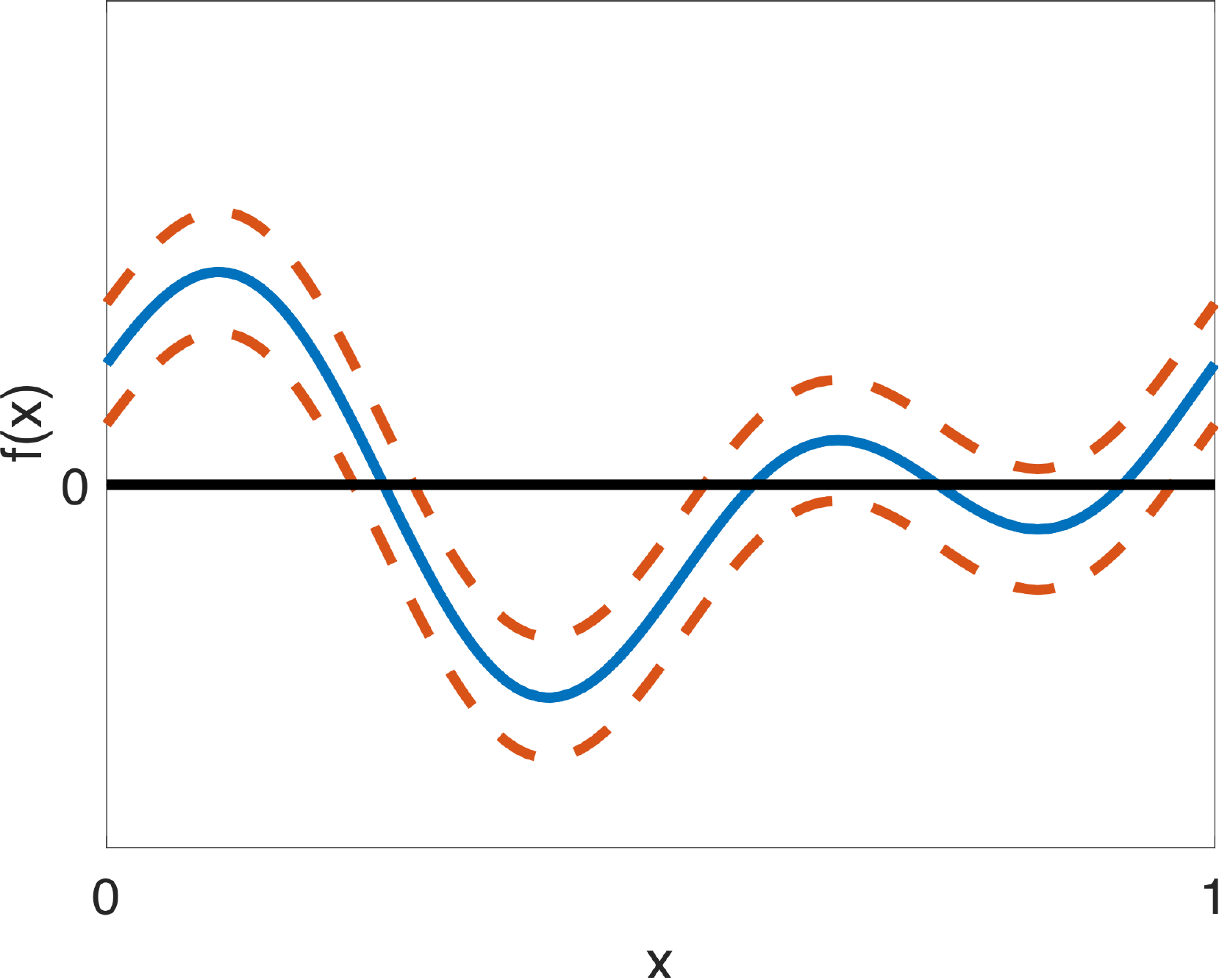}
\caption{Example of a function (solid blue curve), $f(x)$, with finitely many hyperbolic fixed points.  While we have to be especially careful of the derivative in this case, as a demonstration we show how a reasonable function, $g(y)$, satisfying the conditions of \eqref{Eq: pert} will remain within the bounds (dashed red curves), which represent a simplified version of \eqref{Eq: pert}.}\label{Fig: FiniteFP}
\end{figure}

\bigskip

For sufficiency, we show that if the fixed point hypothesis is not satisfied, then \eqref{Eq: f} is not structurally stable. To accomplish this, we need to analyze all the cases in which the hypothesis may be violated. In each case we show that if we add an arbitrarily small $C^{1}$-perturbation $\eta$ of the right form to the original system to obtain $g(x)=f(x)+\eta(x)$, we create a system that is not topologically equivalent to \eqref{Eq: f}. The analysis is focused on nonhyperbolic fixed points, with the intention of showing that an arbitrarily small perturbation can change the homeomorphism type of the original fixed point set, thus ensuring, owing to \eqref{Eq: TE}, that the perturbed system cannot be topologically equivalent to the original.\\

\noindent \textbf{Case 1}: Suppose that $x_*$ is an isolated nonhyperbolic fixed point (so that $f^{\prime}(x_{\ast})=0$) across which $f$ does not change sign. It can be assumed, without loss of generality, that $f(x)>0$ in some punctured $\delta$-interval $\mathring{B}_{\delta}(x_*):=B_{\delta}(x_*) \smallsetminus \{x_*\}$ of $x_*$ as shown in Fig \ref{Fig: case1}.
\begin{figure}[htb]
\centering
\includegraphics[width=.9\textwidth]{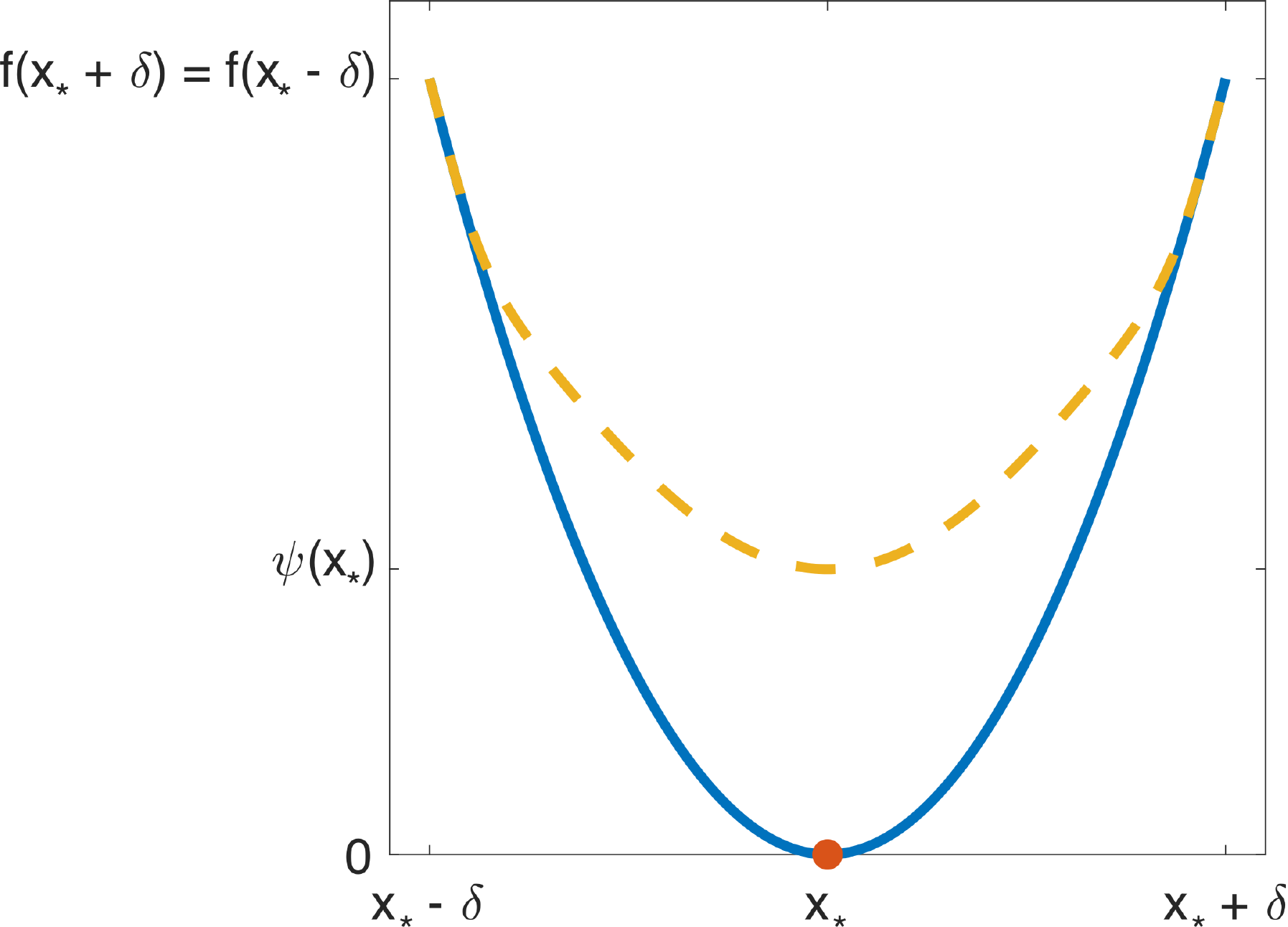}\caption{An example \eqref{Eq: case1Func} of a function (bottom solid blue curve), $f$, that does not change sign in a small interval, $(x_* - \delta, x_* + \delta)$, centered at a nonhyperbolic fixed point, $x_*$.  The function is being perturbed by a bump function restricted to the small interval, and the perturbed function (top dashed yellow curve) is shown to have lost the fixed point, $x_*$.}
\label{Fig: case1}
\end{figure}
Note that if $x_*$ is the only fixed point, the perturbation $g(x)=f(x)+\epsilon$ defines a dynamical system with an empty fixed point set, denoted as $g^{-1}(0)\in \varnothing$, while $f^{-1}(0)\in \{x_*\}$, so it follows from (TE) that $\dot{x}=f$ and $\dot{y}=g$ are topologically inequivalent.

If $x_{\ast}$ is not unique, we have to localize the above analysis.  For the purpose of demonstration, it may be useful for the reader to think of an example function
\begin{equation}
f(x) = \begin{cases}
f_\text{in}\\
f_\text{out}
\end{cases} = \begin{cases}
(x-x_*)^2 & \text{for $x\in (x_* - \delta, x_* + \delta)$},\\
f_\text{out} & \text{for $x\in \mathbb{S}^1\smallsetminus (x_* - \delta, x_* + \delta)$};
\end{cases}
\label{Eq: case1Func}
\end{equation}
where $f_\text{out}$ is an arbitrary function with properties satisfying the hypothesis of the theorem.  We would like to perturb the system in such a way as to annihilate the fixed point without affecting the function outside of this interval.  By perturbing the function only within the $\delta$ interval, we avoid translating or adding and removing the same number of the same type of fixed points, which would defeat the purpose of the perturbation.  This can be accomplished by using a bump function (\textit{cf}. section \ref{Sec: bump}).  Since $x_{\ast}$ is the only fixed point, which we may assume is not an end point of the unit interval, in $(x_{\ast}-\delta,x_{\ast}+\delta)$, we define
\begin{equation}
\psi(x):=
\begin{cases}
\exp\left(  \frac{-\delta^{2}}{\delta^{2}-(x-x_{\ast})^{2}}\right)  &
\text{for}\;x\in(x_{\ast}-\delta,x_{\ast}+\delta),\\
0 & \text{for}\;x\notin(x_{\ast}-\delta,x_{\ast}+\delta);
\end{cases}
\label{Eq: case1}
\end{equation}
It follows from \eqref{Eq: bounded} that for any $\epsilon>0$, there exists a $\sigma>0$ such that $\left\Vert\sigma\psi\right\Vert_1 < \epsilon$. Hence, if $g=f+\sigma\psi$, $\left\Vert f-g\right\Vert _1 < \epsilon$ and $\dot{y}=g$ has no fixed point in $(x_{\ast}-\delta,x_{\ast}+\delta)$. Therefore, $f^{-1}(0)$ and $g^{-1}(0)$ cannot be homeomorphic, which means that $f$ is not structurally stable.\\

\noindent \textbf{Case 2}: Let $x_*$ be a fixed point across which $f$ changes sign, but $f'(x_*)\neq 0$,which can be assumed, without loss of generality, to be as shown in Fig. \ref{Fig: case2}. Again, we define $\delta>0$ such that $f$ is not zero in $\mathring{B}_{\delta}(x_*)$.
\begin{figure}[htb]
\centering
\stackinset{l}{13mm}{t}{2mm}{\textbf{\large (a)}}{\includegraphics[width=.45\textwidth]{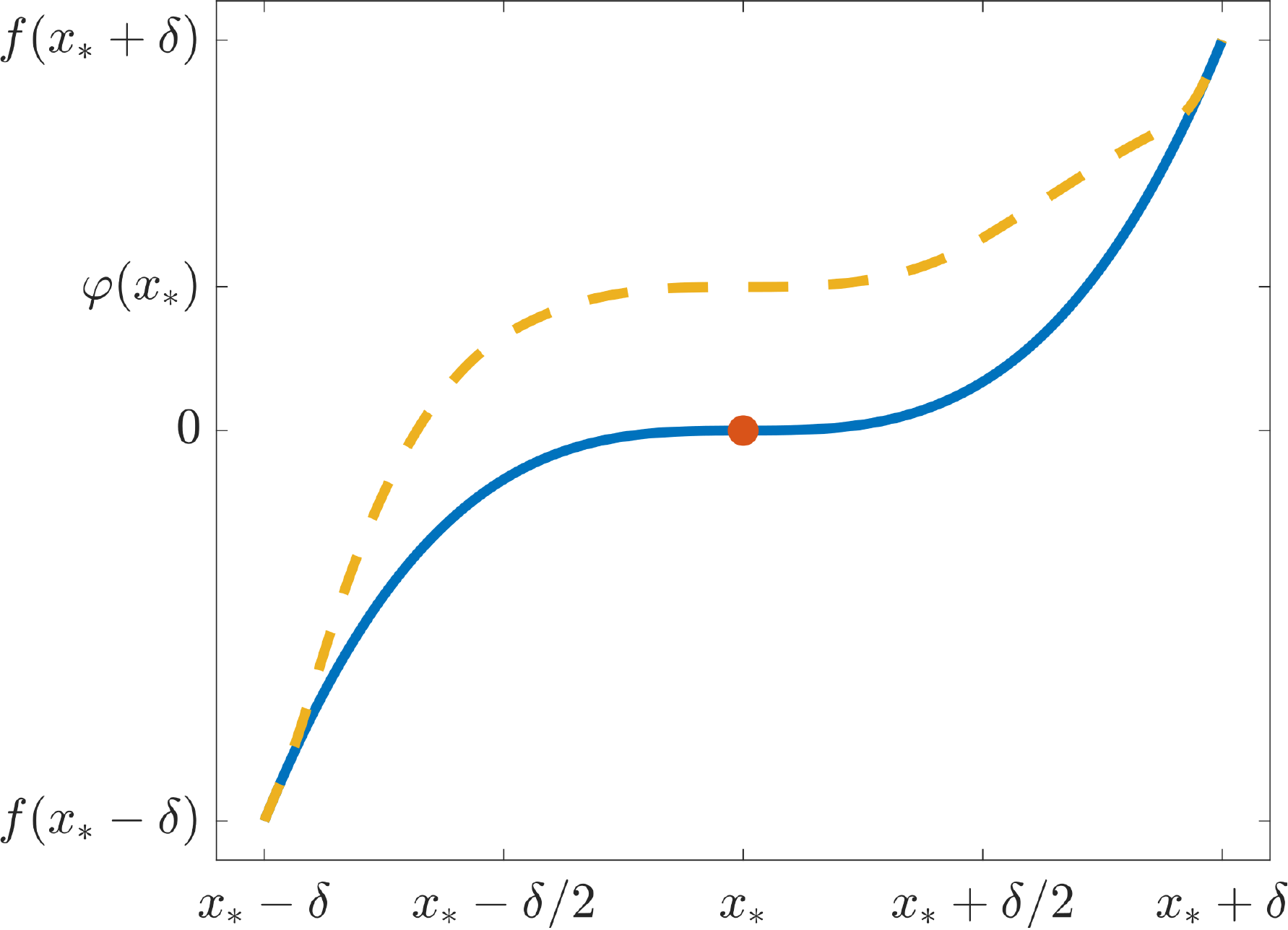}}\qquad
\stackinset{l}{13mm}{t}{2mm}{\textbf{\large (b)}}{\includegraphics[width=.45\textwidth]{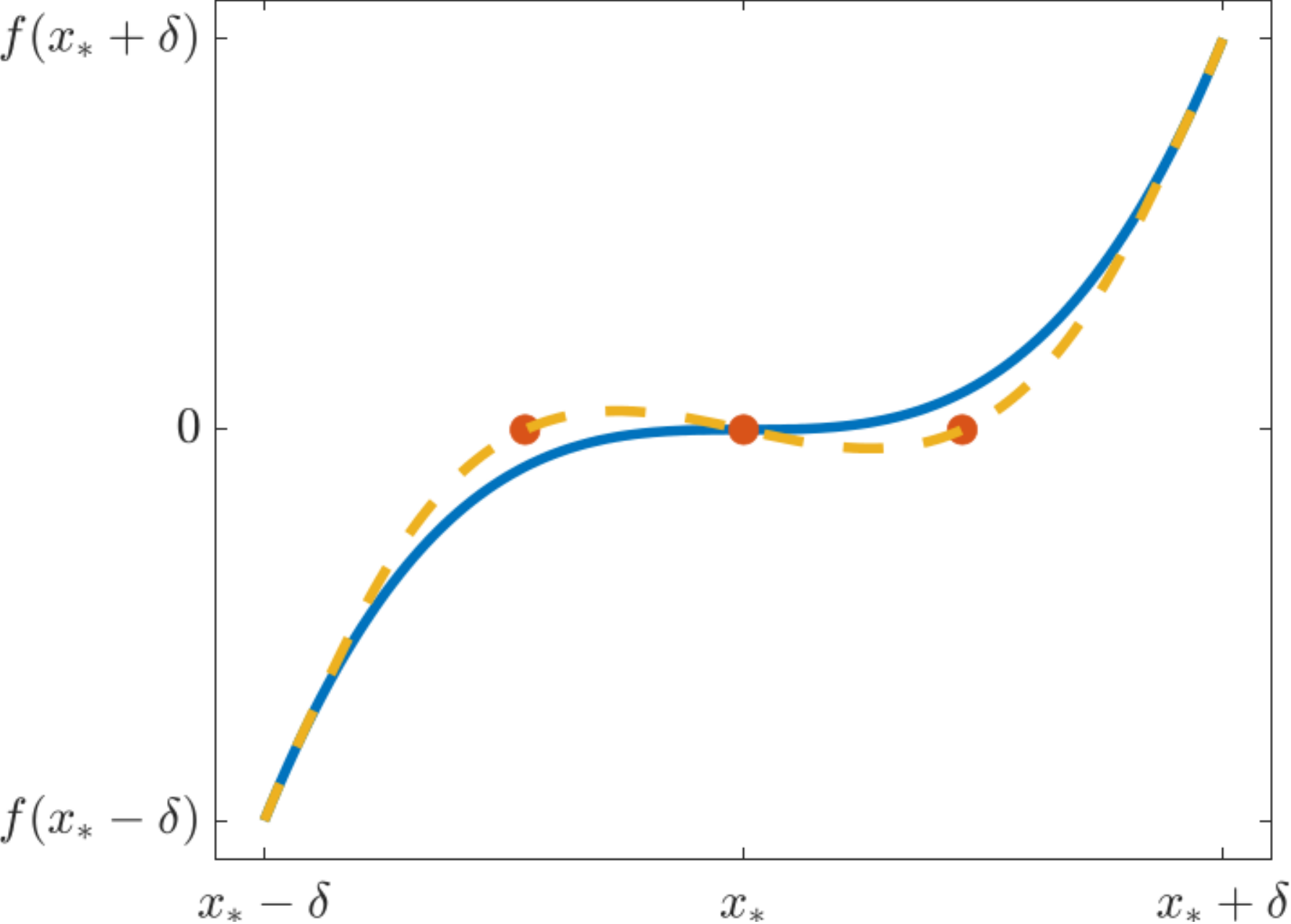}}
\caption{An example \eqref{Eq: case2Func} of a function (solid blue curve), $f$, that changes sign in a small interval, $(x_* - \delta, x_* + \delta)$, centered at a nonhyperbolic fixed point, $x_*$.  \textbf{(a)} The function is perturbed by an extended bump function \eqref{Eq: case2} restricted to the small interval, and the perturbed function (dashed yellow curve) is shown to have converted the fixed point, $x_*$, from nonhyperbolic to hyperbolic.  \textbf{(b)} The function is perturbed by a ``double'' bump function \eqref{Eq: case2v} restricted to the small interval, and the perturbed function (dashed yellow curve) is shown to have gained two hyperbolic fixed points, and converted the original nonhyperbolic fixed point, $x_*$, into yet another hyperbolic fixed point.}
\label{Fig: case2}
\end{figure}
For this case, we shall show there is an arbitrarily small $C^{1}$-perturbation confined to $B_{\delta}(x_{\ast})$, which has three hyperbolic fixed points in this interval instead of one nonhyperbolic one.  Again, for the purpose of demonstration, it may be useful for the reader to think of an example function
\begin{equation}
f(x) = \begin{cases}
f_\text{in}\\
f_\text{out}
\end{cases} = \begin{cases}
(x-x_*)^3 & \text{for $x\in (x_* - \delta, x_* + \delta)$},\\
f_\text{out} & \text{for $x\in \mathbb{S}^1\setminus (x_* - \delta, x_* + \delta)$};
\end{cases}
\label{Eq: case2Func}
\end{equation}
where $f_\text{out}$ is an arbitrary function with properties satisfying the hypothesis of the theorem.

First, we create an extended bump function that is $e^{-1}$ in the closed interval $[x_* - \delta/2, x_* + \delta/2]$ and vanishes in the complement of $(x_* - \delta, x_* + \delta)$; namely,
\begin{equation}
\varphi(x):=\left\lbrace
\begin{array}
[c]{cc}%
\exp\left(  -\left(  \frac{\delta}{2}\right)  ^{2}\middle/\left[  \left(\frac{\delta}{2}\right)  ^{2}-\left(  x-x_{\ast}+\frac{\delta}{2}\right)^{2}\right]  \right)  , & x_{\ast}-\delta<x\leq x_{\ast}-\delta/2\\
e^{-1}, & x_{\ast}-\delta/2\leq x\leq x_{\ast}+\delta/2\\
\exp\left(  -\left(  \frac{\delta}{2}\right)  ^{2}\middle/\left[  \left(\frac{\delta}{2}\right)  ^{2}-\left(  x_{\ast}-x+\frac{\delta}{2}\right)^{2}\right]  \right)  , & x_{\ast}+\delta/2\leq x<x_{\ast}+\delta\\
0, & x\notin(x_{\ast}-\delta,x_{\ast}+\delta)
\end{array}\right\rbrace .
\label{Eq: case2}
\end{equation}
Observe that this function is $C^{\infty}$ on the whole real line except at the points $x_*-\delta/2$ and $x_{\ast}+\delta/2$ where it is only $C^{1}$.  Next, we define a ``double'' bump function by using $\psi(x)$ from \eqref{Eq: case1}.
\begin{equation}
\vartheta(x) := -\frac{2(  x-x_{\ast})\psi(x)}{\delta e},
\label{Eq: case2v}
\end{equation}
which is intended to add fixed points.  Examples of \eqref{Eq: case2} and \eqref{Eq: case2v} are given in Fig \ref{Fig: case2a}.
\begin{figure}[htb]
\centering
\stackinset{l}{12mm}{t}{2mm}{\textbf{\large (a)}}{\includegraphics[width = .46\textwidth]{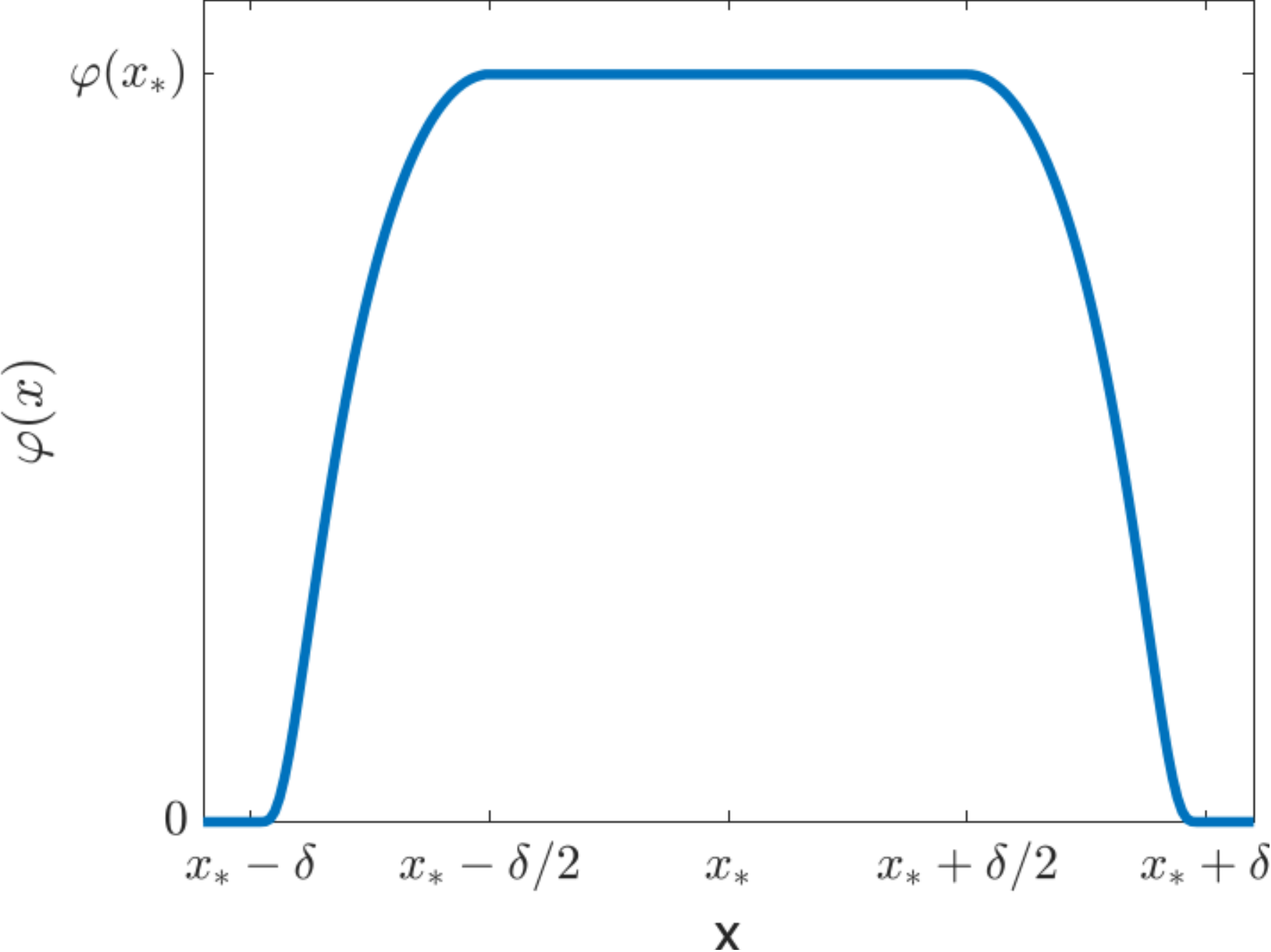}}\qquad
\stackinset{r}{2mm}{t}{2mm}{\textbf{\large (b)}}{\includegraphics[width = .44\textwidth]{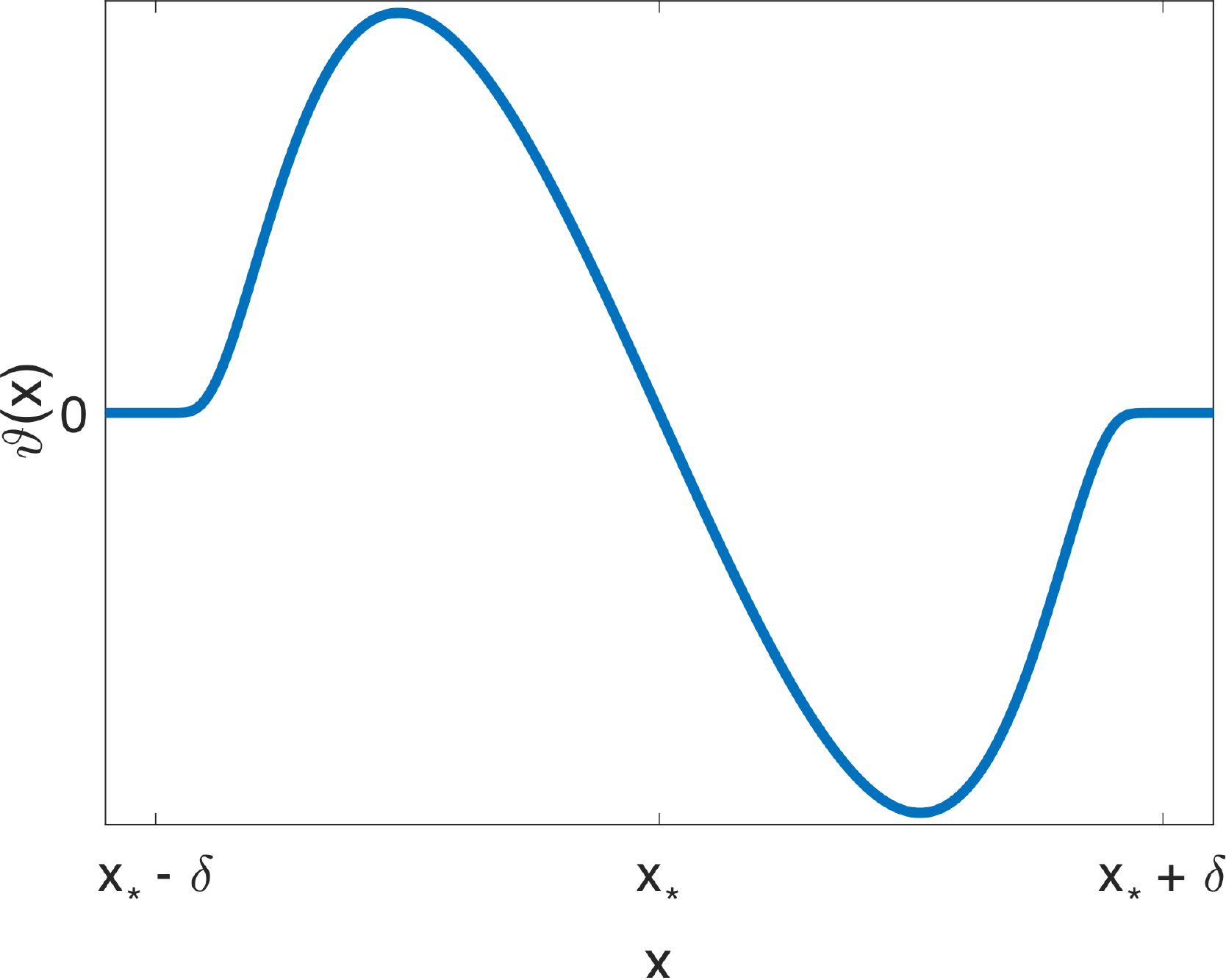}}
\caption{Example plots of \textbf{(a)} \eqref{Eq: case2} and \textbf{(b)} \eqref{Eq: case2v}.}
\label{Fig: case2a}
\end{figure}

Note, as usual, for each $\epsilon>0$ there exists a $\sigma>0$ such that $\left\Vert\sigma\vartheta\right\Vert_1 < \epsilon$.  Moreover, $g:=f+\sigma\vartheta$ has a zero at $x_*$ with $g'(x_*)<0$ and just two other zeros in $B_{\delta}(x_{\ast})$ at points $x_{\ast}\pm\nu$, with $0<\nu<\delta$, and $g'(x_*-\nu) = g'(x_*+\nu) > 0$. As $\left\Vert f-g\right\Vert_1 < \epsilon$ and $f^{-1}(0)$ and $g^{-1}(0)$ are not homeomorphic, \eqref{Eq: TE} implies that $f$ is not structurally stable.\\

\noindent \textbf{Case 3}: An interval of (nonhyperbolic) fixed points, that is, $f(x)=0$ on some interval $[a,b] \subseteq [0,1]$. If $[a,b]=[0,1]$, the addition of an arbitrarily small positive constant changes the fixed point set from all of $\mathbb{S}^{1}$ to the empty set, which proves that such a system cannot be $C^{1}$-structurally stable. On the other hand, if the interval is a proper subset of the unit interval, consider the case where $[a,b]\subseteq (0,1)$ and is isolated from any other points in the fixed point set of $\dot{x}=f$. Accordingly, there is a positive $\delta$ such that $[a-\delta,b+\delta]\subset (0,1)$ and $[a-\delta,b+\delta] \cap f^{-1}(0) = [a,b]$. By analogy with Case 1 and Case 2 above, we consider two subcases: (i) $f$ has the same sign in $(a-\delta,a)$ and $(b,b+\delta)$; and (ii) $f$ has opposite signs in $(a-\delta,a)$ and $(b,b+\delta)$. Naturally, we may assume without loss of generality the sign in (i) is positive, and in (ii) it goes from negative to positive. It is convenient to use the following analog of the bump function, $\psi$, for both (i) and (ii):
\begin{equation}
\hat{\varphi}(x):=\left\{
\begin{array}
[c]{cc}%
\exp\left(  -\left(  \frac{\delta}{2}\right)  ^{2}\middle/\left[  \left(\frac{\delta}{2}\right)  ^{2}-\left(  x-a+\frac{\delta}{2}\right)^{2}\right]  \right)  , & a-\delta<x\leq a-\delta/2\\
e^{-1}, & a-\delta/2\leq x\leq b+\delta/2\\
\exp\left(  -\left(  \frac{\delta}{2}\right)  ^{2}\middle/\left[  \left(\frac{\delta}{2}\right)^{2}-\left(  b-x+\frac{\delta}{2}\right)^{2}\right]  \right), & b+\delta/2\leq x<b+\delta\\
0, & x\notin(a-\delta,b+\delta)
\end{array}
\right.  . \label{Eq: case3}%
\end{equation}
As for any $\epsilon>0$ there is a $\sigma>0$ such that $\left\Vert\sigma\hat{\varphi}\right\Vert_1 <\epsilon$, in subcase (i) the perturbation $\dot{y}=g=f+\sigma\hat{\varphi}$ has $\left\Vert f-g\right\Vert_1 < \epsilon$ and has no fixed points in $[a-\delta,b+\delta]$, which means it cannot be topologically equivalent to $\dot{x}=f$ in virtue of \eqref{Eq: TE}.  On the other hand, for subcase (ii), the perturbation $\dot{y}=g$, where
\begin{equation*}
g:=f+\sigma\hat{\vartheta},
\end{equation*}
with
\begin{equation}
\hat{\vartheta}(x):=-\frac{2\hat{\varphi}(x)}{e\left(  a+b\right)  }\left(x-\frac{a+b}{2}\right),
\label{Eq: case3v}
\end{equation}
produces an arbitrarily small $C^{1}$-perturbation of $\dot{x}=f$ having precisely three hyperbolic fixed points in $[a-\delta,b+\delta]$.  Therefore, $f$ is not structurally stable for any of these subcases.\\

\noindent \textbf{Case 4}: Suppose $\dot{x}=f$ has distinct fixed points $x_1,\, x_2,\, x_3,\, \ldots$ and $x_n \rightarrow x_*$ , so that the limit  $x_*$ is a nonhyperbolic fixed point. The sequence and its limit may be assumed to lie in an open interval $J = (x_*-r,x_*+r)$ contained in $(0,1)$, which contains no other fixed points. The sequence might consist of all hyperbolic fixed points as shown in Fig \ref{Fig: countable}, or it might be comprised of some combination of hyperbolic fixed points and nonhyperbolic fixed points of the types treated in Cases 1 and 2.  Once again, for the purpose of demonstration, it may be useful for the reader to think of an example function
\begin{equation}
f(x) = \begin{cases}
f_\text{in}\\
f_\text{out}
\end{cases} = \begin{cases}
(x-x_*)\sin\left(\frac{1}{(x-x_*)^3}\right) & \text{for $x\in (x_* - \delta, x_* + \delta)$},\\
f_\text{out} & \text{for $x\in \mathbb{S}^1\setminus (x_* - \delta, x_* + \delta)$};
\end{cases}
\label{Eq: case4Func}
\end{equation}
where $f_\text{out}$ is an arbitrary function with properties satisfying the hypothesis of the theorem.
\begin{figure}[htbp]
\centering
\includegraphics[width=.9\textwidth]{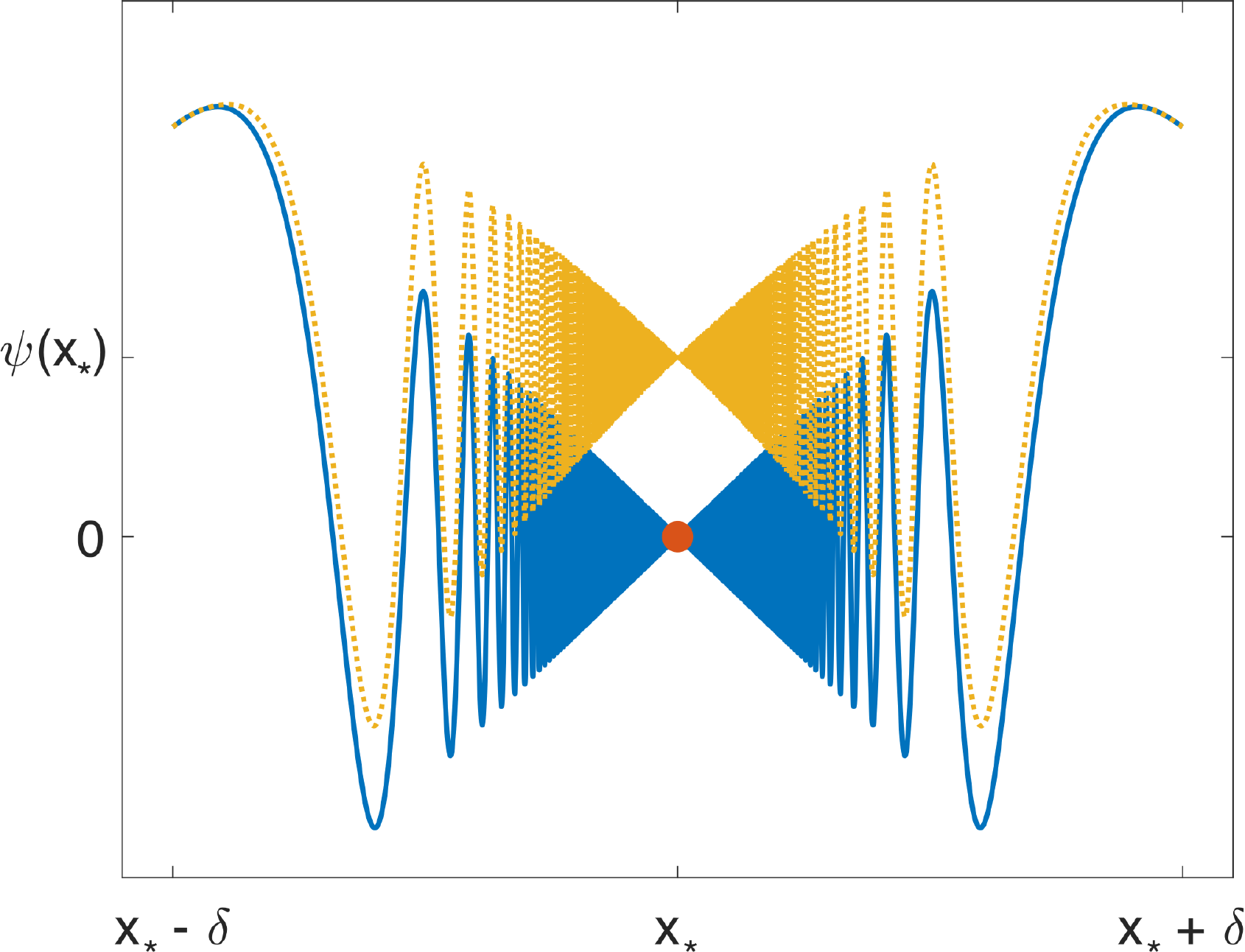}
\caption{An example \eqref{Eq: case4Func} of a function (bottom solid blue curve), $f$, with countably many fixed points (hyperbolic or nonhyperbolic) in a small interval, $(x_* - \delta, x_* + \delta)$, centered at a nonhyperbolic fixed point, $x_*$.  The function is being perturbed by a bump function restricted to the small interval, and the perturbed function (top dashed yellow curve) is shown to have lost infinitely many fixed points.}
\label{Fig: countable}
\end{figure}

Notice that $\{x_n\}$ is countably infinite, so that if we perturb the system $\dot{x}=f$ to a system $\dot{y}=g$ with only finitely many fixed points in $J$, and the same fixed points in the complement of $J$, the two systems must be topologically inequivalent.  Since $f \in C^{1}$, for any $\epsilon>0$ there is a positive $\delta=\delta(\epsilon)<\epsilon$ such that $|x-x_*| < \delta$ implies $\left\vert f(x)\right\vert =|f(x)-f(x_*)| < \epsilon\left\vert x-x_*\right\vert$. Furthermore, there are only finitely many fixed points in $J \smallsetminus B_s(x_*)$ for any $0<s<r$. Let us use the bump function
\begin{equation}
\psi(x):=
\begin{cases}
\exp\left(  \frac{-(r/2)^{2}}{(r/2)^{2}-(x-x_{\ast})^{2}}\right) & \text{for}\;x\in(x_{\ast}-r/2,x_{\ast}+r/2),\\
0 & \text{for}\;x\notin(x_{\ast}-r/2,x_{\ast}+r/2);
\end{cases},
\end{equation}
and for any given $\epsilon>0$ choose $\sigma>0$ such that $\left\Vert g-f\right\Vert_1=\left\Vert \left(f+\sigma\psi\right)-f\right\Vert_1<\epsilon$.  Then $g$ has no zeros in $B_s(x_*)$ for some $0<s<r/2$ and so only finitely many fixed points in $J$, which means that $f$ is not structurally stable. Thus, the proof of Peixoto's theorem on $\mathbb{S}^1$ is complete.
\end{proof}

We note that owing to the compactness of the unit interval with identified end points, it suffices in Theorem 1 to simply require all fixed points be hyperbolic.

\begin{exmp}[Combining cases]
In this short example let us consider the vector field
\begin{equation}
f(x) = \begin{cases}
r(x) & \text{for $x \in [0,1/4]$},\\
s(x) & \text{for $x \in (1/4, 1/2)$},\\
0 & \text{for $x \in [1/2,1]$};
\end{cases}
\label{Eq:  Combination Vector Field}
\end{equation}
where
\begin{equation*}
r(x) = x^5\sin(1/x^3);
\end{equation*}
and
\begin{equation*}
s(x) = 
r(1/4)\begin{cases}
\exp\left(1 - \left(\frac{1}{4}\right)^{2}\middle/\left[\left(\frac{1}{4}\right)^{2}-\left(x-\frac{1}{4}\right)^{2}\right]\right)  & \text{for}\;x\in(0,1/2),\\
0 & \text{for}\;x\notin(0,1/2);
\end{cases}
\end{equation*}
This is a combination of Cases 3 and 4.  On the first interval $[0,1/4]$, the function crosses the x-axis countably infinite times such as in Case 4.  The last interval $[1/2, 1]$ is like Case 3 where we have an interval of fixed points; that is, an uncountable amount of them.  Finally, the middle interval transitions between the functions in the first and last intervals smoothly using the Bump function.  The function $f(x)$ is illustrated in Fig. \ref{Fig:  Combination Example}(a).

Now we set up the perturbation.  We recall from the proof that a bump function is able to keep the perturbation on a bounded interval thereby leaving the rest of the function unaffected.  Here we have two intervals on which we would need to introduce a perturbation to show sufficiency of Theorem \ref{Thm: peixoto}.  A combination of two bump functions centered within the two intervals and zero everywhere else will work.  Consider the perturbation
\begin{equation}
\eta(x) = \epsilon\begin{cases}
\exp\left(- \left(\frac{1}{8}\right)^{2}\middle/\left[\left(\frac{1}{8}\right)^{2}-\left(x-\frac{1}{8}\right)^{2}\right]\right)  & \text{for}\;x\in[0,1/4],\\
\exp\left(- \left(\frac{1}{4}\right)^{2}\middle/\left[\left(\frac{1}{4}\right)^{2}-\left(x-\frac{3}{4}\right)^{2}\right]\right)  & \text{for}\;x\in[1/2,1],\\
0 & \text{otherwise}
\end{cases}
\label{Eq:  Combination Perturbation}
\end{equation}
for any sufficiently small $\epsilon > 0$.  The combined bump functions are shown in Fig. \ref{Fig:  Combination Example}(b).

After setting up our perturbation we simply add it to the original function:  $g(x) = f(x) + \eta(x)$.  We notice in Fig. \ref{Fig:  Combination Example}(c) that $g(x)$ will now have finitely many zeros; that is, $\dot{x} = g(x)$ has finitely many fixed points.  Therefore, $\dot{x} = g(x)$ and $\dot{x} = f(x)$ are not topologically equivalent, and hence the sufficiency criteria for Theorem \ref{Thm: peixoto} is satisfied.
\begin{figure}[htbp]
\centering
\stackinset{r}{1mm}{b}{6mm}{\textbf{\large (a)}}{\includegraphics[width = .32\textwidth]{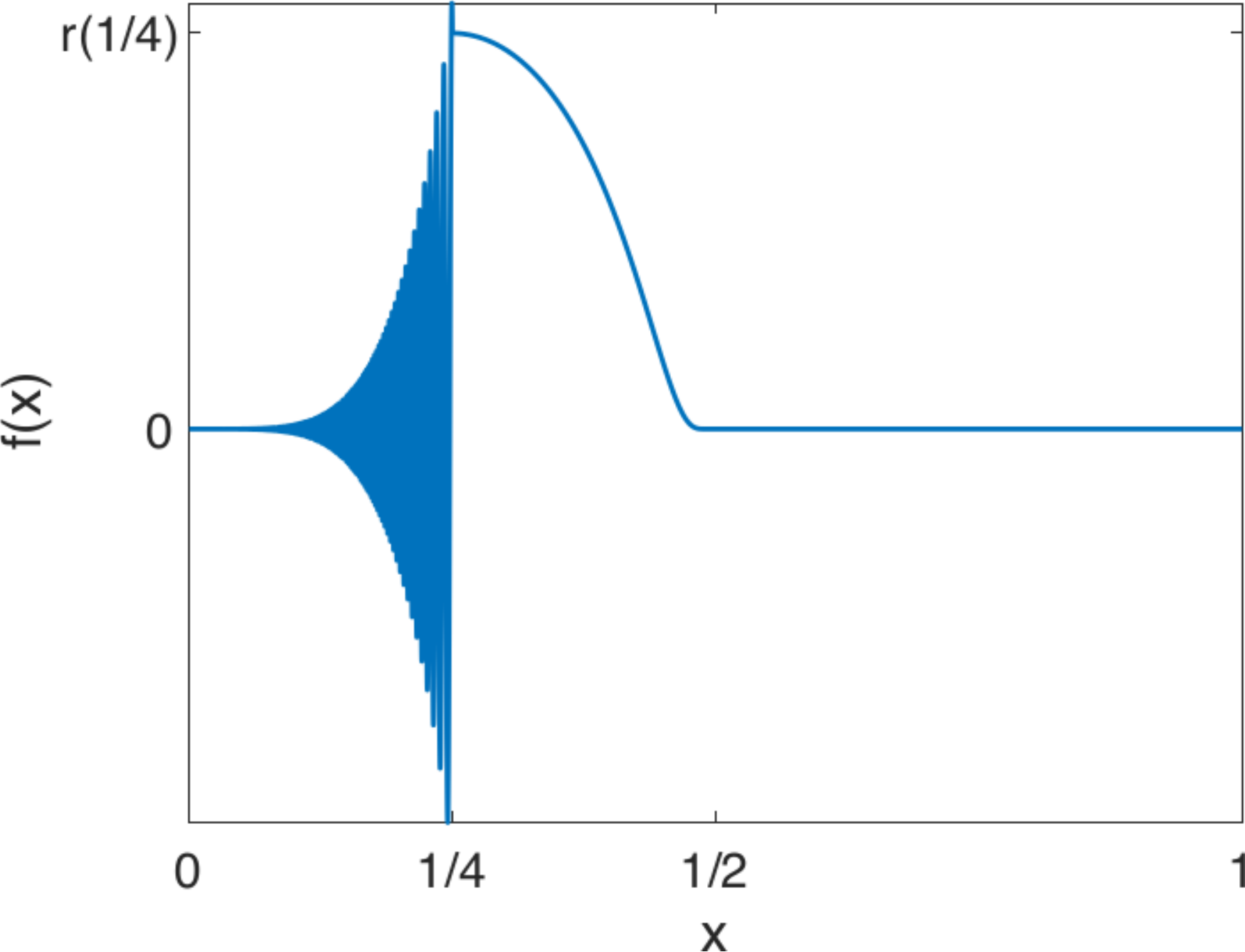}}
\stackinset{r}{1mm}{b}{6mm}{\textbf{\large (b)}}{\includegraphics[width = .32\textwidth]{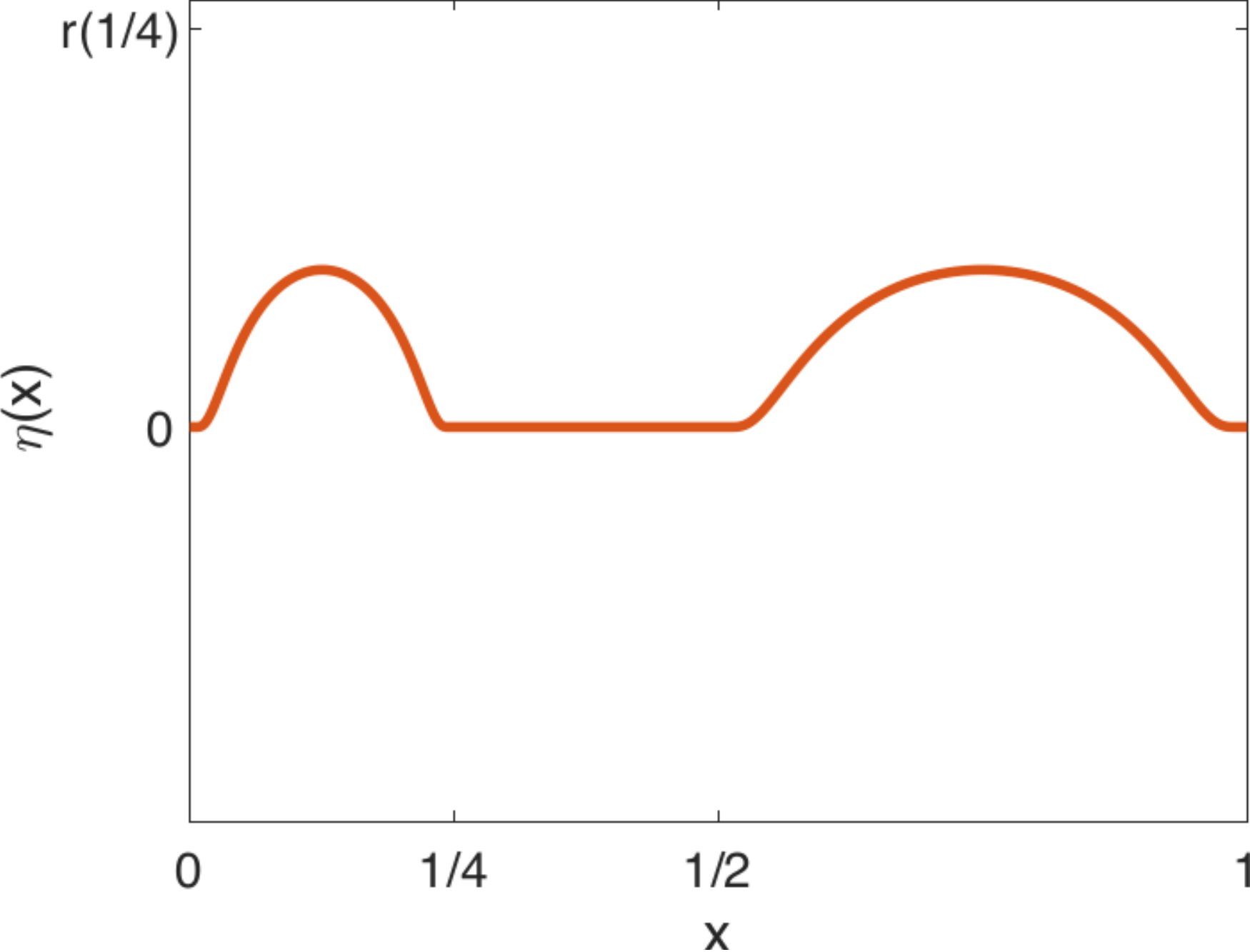}}
\stackinset{r}{1mm}{b}{6mm}{\textbf{\large (c)}}{\includegraphics[width = .32\textwidth]{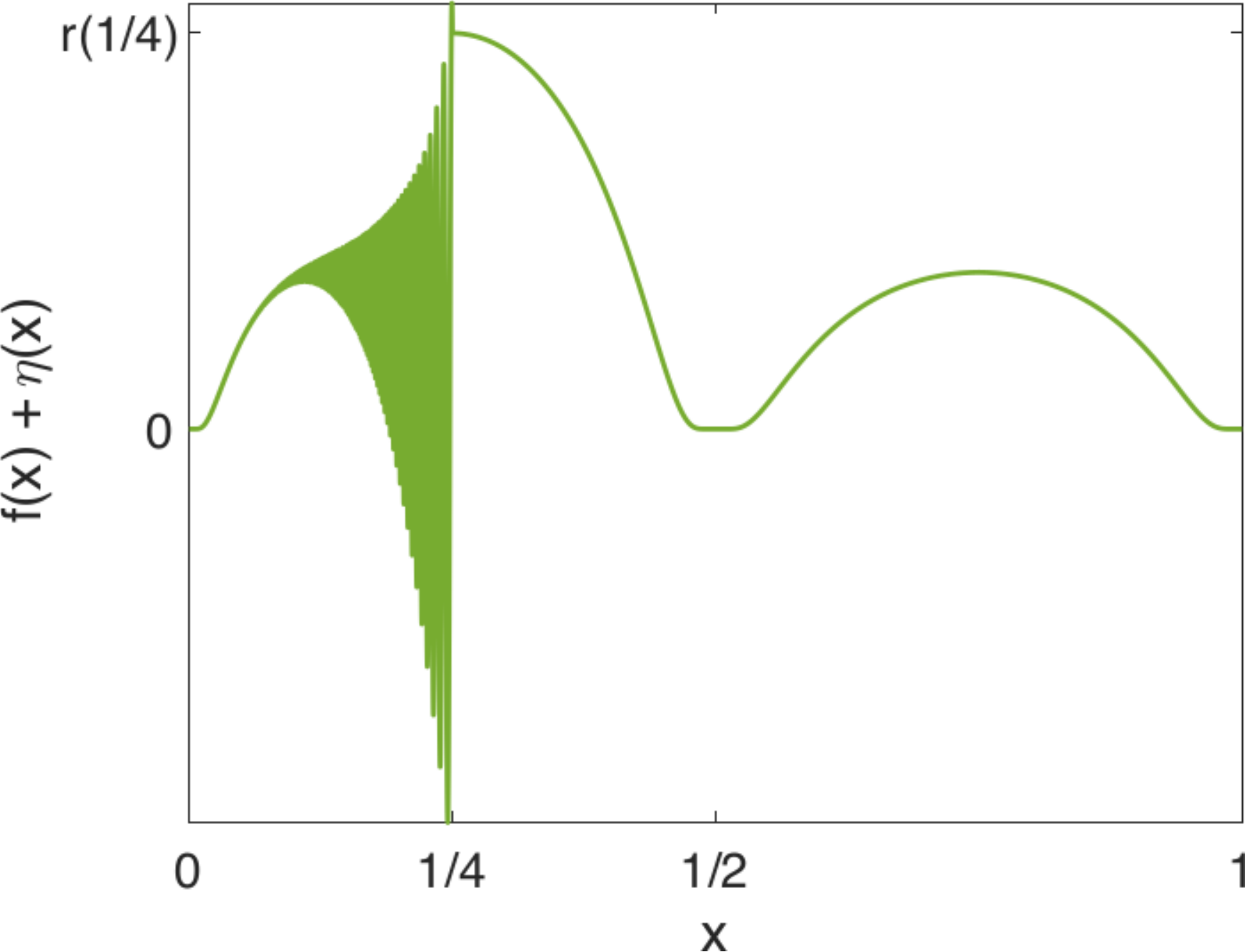}}
\caption{Combination of Cases 3 and 4.  \textbf{(a)} Plot of the function $f(x)$ \eqref{Eq:  Combination Vector Field}.  \textbf{(b)}  Plot of the perturbation $\eta(x)$ \eqref{Eq:  Combination Perturbation}.  \textbf{(c)}  Plot of the perturbed function $g(x) = f(x) + \eta(x)$.}
\label{Fig:  Combination Example}
\end{figure}
\end{exmp}

\section{Density theorem on $\mathbb{S}^{1}$}\label{Sec: Density}

We now prove the one-dimensional analog of the density part of Theorem \ref{Thm: Peixoto2D}. It is convenient to introduce the following notation towards this end.  Define $SS^{1}\left(\mathbb{S}^{1}\right)$ to be the $C^{1}$-structurally stable systems $\dot{x}=f(x)$ on $C^{1}\left(\mathbb{S}^{1}\right).$

\begin{thm}
The set of dynamical systems $SS^{1}\left(\mathbb{S}^{1}\right)$ is $C^{1}$ open and dense in $C^{1}\left(\mathbb{S}^{1}\right).$
\label{Thm: Density}
\end{thm}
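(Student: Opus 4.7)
The plan is to apply Theorem~\ref{Thm: peixoto}, which identifies $SS^{1}(\mathbb{S}^{1})$ with the set of $f \in C^{1}(\mathbb{S}^{1})$ whose zero set is finite and purely hyperbolic. Openness and density of this set can then be established separately.

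For openness, given $f \in SS^{1}(\mathbb{S}^{1})$ with hyperbolic zeros $x_{1},\ldots,x_{m}$, I would simply recycle the estimates from the first half of the necessity argument of Theorem~\ref{Thm: peixoto}: pick $\delta>0$ so that the intervals $[x_{k}-\delta,x_{k}+\delta]$ are pairwise disjoint, and constants $\epsilon_{0},\epsilon_{1}>0$ with $|f'|\geq 2\epsilon_{0}$ on each of these intervals and $|f|\geq 2\epsilon_{1}$ on the closed complement $K(\delta)$. Then for any $g$ with $\|g-f\|_{1}<\epsilon:=\min\{\epsilon_{0},\epsilon_{1}\}$ the same reasoning shows $g$ has no zero in $K(\delta)$ and exactly one hyperbolic zero in each $[x_{k}-\delta,x_{k}+\delta]$, so $g\in SS^{1}(\mathbb{S}^{1})$, giving $B^{1}_{\epsilon}(f)\subseteq SS^{1}(\mathbb{S}^{1})$.

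For density, given $f \in C^{1}(\mathbb{S}^{1})$ and $\epsilon>0$, I would try the one-parameter family of constant perturbations $g_{c}(x):=f(x)-c$. Since the derivative of the perturbation vanishes, $\|g_{c}-f\|_{1}=|c|$. If $c$ happens to be a \emph{regular value} of $f$, meaning $f'(x)\neq 0$ whenever $f(x)=c$, then every zero of $g_{c}$ satisfies $g_{c}'=f'\neq 0$, hence is hyperbolic and therefore isolated; compactness of $\mathbb{S}^{1}$ then forces only finitely many zeros, so $g_{c}\in SS^{1}(\mathbb{S}^{1})$ by Theorem~\ref{Thm: peixoto}. The density statement thus reduces to producing arbitrarily small regular values of $f$.

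The main obstacle is exactly this last reduction, which is a one-dimensional Sard-type lemma asserting that the set of critical values $f(C)$, where $C:=\{x\in \mathbb{S}^1:f'(x)=0\}$, has empty interior in $\mathbb{R}$. I would prove it by an elementary length estimate: for each $\eta>0$, continuity of $f'$ yields an open set $U_{\eta}\supseteq C$ on which $|f'|<\eta$; writing $U_{\eta}$ as a countable disjoint union of open intervals $I_{j}$ whose lengths sum to at most $1$, the mean value theorem gives $\operatorname{length}(f(I_{j}))\leq\eta\cdot\operatorname{length}(I_{j})$, and summing shows $f(C)\subseteq\bigcup_{j}f(I_{j})$ has Lebesgue outer measure at most $\eta$. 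Letting $\eta\to 0$ shows $f(C)$ has measure zero, so every neighborhood of $0$ contains a regular value of $f$, which is what density demands. This Sard-type step is the only ingredient beyond Theorem~\ref{Thm: peixoto}, and because its one-dimensional $C^{1}$ proof is entirely elementary, it keeps the overall argument within the intended undergraduate scope.
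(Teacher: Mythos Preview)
Your openness argument is exactly the paper's: both observe that the estimates already established in the necessity half of Theorem~\ref{Thm: peixoto} persist for every sufficiently $C^{1}$-close $g$, forcing $g$ to again have finitely many hyperbolic zeros.

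For density, however, you take a genuinely different route. The paper proceeds in two stages with bump functions: first perturb $f$ to a system with only finitely many zeros (appealing loosely to the sufficiency constructions of Theorem~\ref{Thm: peixoto}), and then apply the Case~1 and Case~2 bump perturbations locally to render each remaining zero hyperbolic. You instead achieve density in a single global step, subtracting a constant $c$ and invoking an elementary one-dimensional $C^{1}$ Sard lemma to select $c$ as a regular value of $f$. Your argument is shorter and arguably tighter---the paper's first stage is asserted rather than spelled out---and it is precisely the transversality shortcut the authors acknowledge in Section~\ref{Sec: Remarks}. What the paper's approach buys is thematic unity: the bump function remains the sole perturbation device throughout the manuscript, whereas your route imports a measure-theoretic ingredient (Lebesgue outer measure of $f(C)$) that, although entirely elementary, is a new idea not otherwise used.
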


\begin{proof}

The openness follows directly from the necessity proof of Theorem \ref{Thm: peixoto}, and the density is essentially a straightforward consequence of the sufficiency argument for the same theorem. In particular, it was shown in the necessity proof that the fixed point hypothesis is preserved under sufficiently small perturbations, and so $SS^{1}\left(\mathbb{S}^{1}\right)$ is a $C^{1}$ open subset of $C^{1}\left(\mathbb{S}^{1}\right)$.

It is clear from the methods used in proving the sufficiency part of Theorem \ref{Thm: peixoto}, that for any $C^{1}$ dynamical system $\dot{x}=f(x)$ on $\mathbb{S}^{1}$ there is an arbitrarily small $C^{1}$ perturbation $\dot{y}=g(y)$ such that $g(y)$ has only finitely many zeros. Then, using the bump function methods employed for Cases 1 and 2 of the sufficiency portion in Theorem \ref{Thm: peixoto}, we can obtain a further arbitrarily small perturbation $\dot{z}=h(z)$ with only hyperbolic fixed points, thereby completing the proof.

\end{proof}

\section{Brief aside}\label{Sec: Remarks}

It is worth noting that one could have used several other types of bump function based perturbations in the above proofs of the necessity of the hyperbolic hypothesis in Theorem \ref{Thm: peixoto} and the density result in Theorem \ref{Thm: Density}. For example, the functions $\vartheta$ and $\hat{\vartheta}$ used for Case 2 and Case 3 (ii), respectively, in the necessity proof of Theorem \ref{Thm: peixoto} could be replaced with an appropriate form of the derivative of a bump function, as is evident from the plot of the first and second derivatives of the simple bump function \eqref{Eq: simple bump} given in Fig \ref{Fig: dbump}.

\begin{figure}[htb]
\centering
\stackinset{l}{8mm}{b}{10mm}{\textbf{\large (a)}}{\includegraphics[width=.45\textwidth]{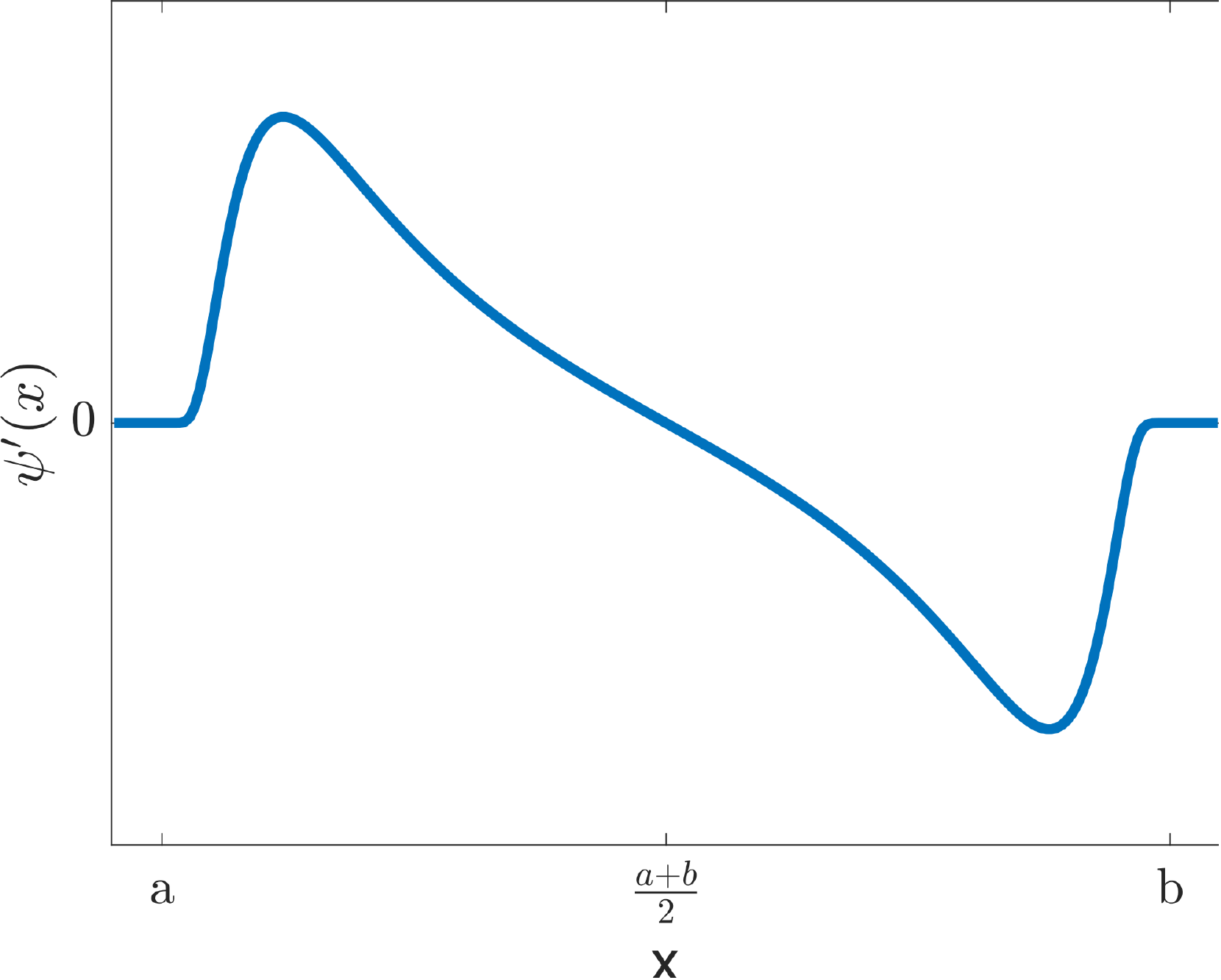}}\qquad
\stackinset{l}{8mm}{b}{10mm}{\textbf{\large (b)}}{\includegraphics[width=.45\textwidth]{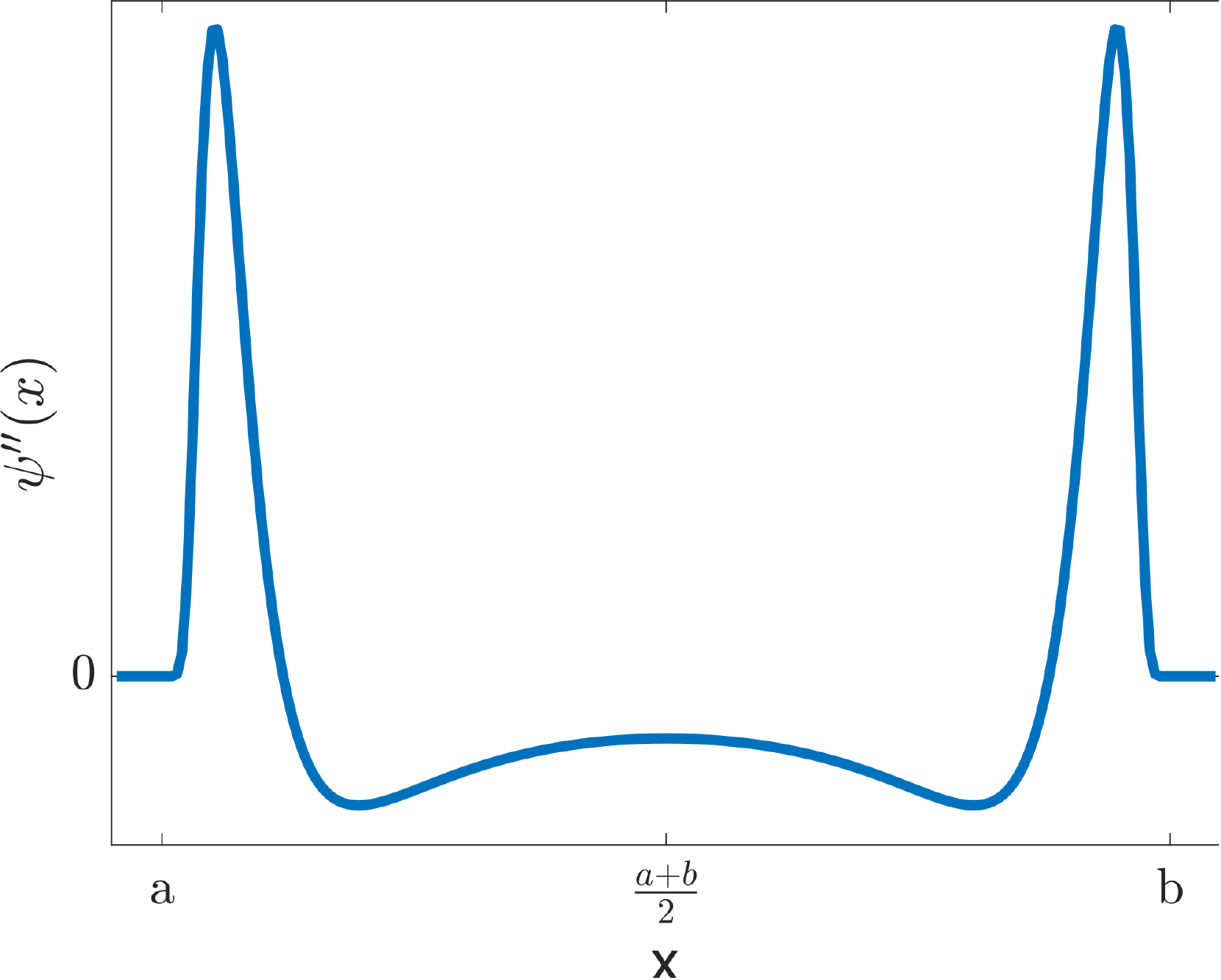}}
\caption{The \textbf{(a)} first and \textbf{(b)} second derivatives of the simple bump function.}
\label{Fig: dbump}
\end{figure}

Finally, it is interesting to remark that the proofs of both Theorems \ref{Thm: peixoto} and \ref{Thm: Density} can be reduced to just a few lines by the application of a standard transversality theorem described for example in \cite{Guillemin-Pollack74}, which is an indication of the importance of differential topology in the modern theory of dynamical systems.  However, the authors also note that this is not a standard topic of traditional undergraduate mathematics.

\section{Conclusion and Memoriam}\label{Sec: Conclusion}

The epochal structural stability and density theorems of Peixoto for dynamical systems on closed surfaces have long and complicated proofs involving concepts unfamiliar to many undergraduate enthusiasts. In this manuscript we have demonstrated that the one-dimensional analogs of these theorems can be proved using methods that are well known to most undergraduate mathematics majors, thus providing a useful introduction to many of the elements of the two-dimensional proofs. One might well imagine that the Peixotos themselves considered the one-dimensional version and used it, along with the pioneering efforts of Andronov and Pontryagin, as a guide for their theorems.

In memory of Mar\'{i}lia and Maur\'{i}cio Peixotos' lives on this 100\textsuperscript{th} anniversary of their births, the authors would like to express the personal significance of the Peixotos' work and Peixoto's theorem in particular.  For A.R., learning about Peixoto's theorem in D.B.'s undergraduate course on nonlinear dynamics was the first time the study of dynamical systems went from a curiosity about its applications to a voracious interest in the underlying analysis.  Furthermore, Maur\'{i}cio Peixoto's friendship with Solomon Lefschetz also resonated with A.R., as he was also inspired by his close relationship with his mentor D.B.  This is of particular significance since the recent untimely passing of D.B. D.B's contributions to Dynamical Systems cannot be overstated, and he will be deeply missed by all that knew him.


\section{Acknowledgement}

The authors would like to thank SIAM DS Web as the original module was conceived as part of the SIAM DS Web 2013 pedagogy prize.  Moreover, the authors would also like to show their appreciation towards the reviewers for their detailed feedback and suggestions that were instrumental in improving this manuscript.  In particular, we appreciate one of the reviewers for suggesting the function explored in Example 1.  A.R. appreciates the support of the Department of Applied Mathematics at UW, and D.B. appreciates the support of the Department of Mathematical Sciences at NJIT.  Finally, A.R. would like to express his heartfelt gratitude to his late mentor and friend D.B.


\bibliographystyle{unsrt}
\bibliography{Peixoto}

\begin{thebibliography}{10}

\bibitem{Leibniz}
Gottfried Wilhelm~von Leibniz.
\newblock Nova methodus pro maximis et minimis, itemque tangentibus, quae nec
  fractas nec irrationales quantitates moratur, et singulare pro illis calculi
  genus.
\newblock {\em Acta eruditorum}, 1684.

\bibitem{Principia}
Isaac Newton.
\newblock {\em Philosophi{\ae} Naturalis Principia Mathematica}.
\newblock The Royal Society, 1687.

\bibitem{Methodus}
Isaac Newton.
\newblock Methodus fluxionum et serierum infinitarum cum eisudem applicatione
  ad curvarum geometriam.
\newblock {\em Opuscola mathematica philosophica et philologica}, 1744.

\bibitem{Bernoulli}
Jacob Bernoulli.
\newblock Explicationes, annotationes et additiones ad ea, quae in actis sup.
  de curva elastica, iisochrona paracentrica, et velaria, hinc inde memorata,
  et paratim controversa legundur; ubi de linea mediarum directionum, alliisque
  novis.
\newblock {\em Acta Eruditorum}, 1695.

\bibitem{Poincare1}
Jules~Henri Poincar{\'e}.
\newblock {\em Les m{\'e}thodes nouvelles de la m{\'e}canique c{\'e}leste},
  volume~1.
\newblock Gauthier-Villars, Paris, France, 1892-1899.

\bibitem{Poincare2}
Jules~Henri Poincar{\'e}.
\newblock {\em Le{\c c}ons de M{\'e}canique C{\'e}leste}, volume~1.
\newblock Gauthier-Villars, 1905-1910.

\bibitem{Strogatz94}
Steven Strogatz.
\newblock {\em Nonlinear Dynamics and Chaos}.
\newblock Westview Press, Cambridge, MA, 1994.

\bibitem{Perko01}
L.~Perko.
\newblock {\em Differential Equations and Dynamical Systems}.
\newblock Springer-Verlag, New York, NY, 3 edition, 2001.

\bibitem{Meiss07}
James Meiss.
\newblock {\em Differential Dynamical Systems}.
\newblock SIAM, Philadelphia, PA, 2007.

\bibitem{BlanchardDevaneyHallODE}
P.~Blanchard, R.~L. Devaney, and G.~R. Hall.
\newblock {\em Differential Equations}, volume~4.
\newblock Cengage Learning, 2011.

\bibitem{Smale1967}
Stephen Smale.
\newblock Differentiable dynamical systems.
\newblock {\em Bull. Amer. Math. Soc.}, 73:747--817, 1967.

\bibitem{Milnor97}
J.~Milnor.
\newblock {\em Topology from the Differentiable Viewpoint}.
\newblock Princeton University Press, Princeton, NJ, 1997.

\bibitem{Sotomayor2001}
J.~Sotomayor.
\newblock Introduction: A few words about mauricio m. peixoto on his
  80\textsuperscript{th} birthday.
\newblock {\em Comput. Appl. Math.}, 20(1):1--5, 2001.

\bibitem{PeixotoPeixoto1959}
Marilia~Chaves Peixoto and Maur{\'\i}cio~Matos Peixoto.
\newblock Structural stability in the plane with enlarged boundary conditions.
\newblock {\em An. Acad. Bras. Cienc.}, 31:135--160, 1959.

\bibitem{Peixoto62}
Maur{\'\i}cio~Matos Peixoto.
\newblock Structural stability on two-dimensional manifolds.
\newblock {\em Topology}, 1:101--120, 1962.

\bibitem{Andronov-Pontryagin37}
A.~Andronov and L.~Pontryagin.
\newblock Syst{\`e}mes grossiers.
\newblock {\em Dokl. Akad. Nauk. SSSR}, 14:247--251, 1937.

\bibitem{Peixoto1959}
Maur{\'\i}cio~Matos Peixoto.
\newblock On structural stability.
\newblock {\em Ann. of Math}, 69(2):199--222, 1959.

\bibitem{Guillemin-Pollack74}
V.~Guillemin and A.~Pollack.
\newblock {\em Differential Topology}.
\newblock Prentice-Hall, 1974.

\end{thebibliography}

\end{document}